\newtheorem{Lemma}{Lemma}
\newtheorem{Proposition}[Lemma]{Proposition}
\newtheorem{Remark}[Lemma]{Remark}
\newtheorem{Theorem}[Lemma]{Theorem}
\newcommand{\R}{\mathbb{R}}               
\begin{document}

\title[Social outbursts in a tension inhibitive regime]{Traveling wave solutions in a model for social outbursts in a tension-inhibitive regime }

 
\date{\today}

\begin{abstract}
{ 
In this work we investigate the existence of non-monotone traveling wave solutions to a reaction-diffusion system 
modeling social outbursts, such as rioting activity, originally proposed in \cite{BJPN}.  The model consists of two scalar values, the 
level of unrest $u$ and a tension field $v$.  A key component of the model is a bandwagon effect in the unrest, provided the tension is sufficiently high.
We focus on  the so-called tension inhibitive regime, characterized by the fact that the level of unrest has a negative feedback on the tension. 
This regime has been shown to be physically relevant for the spatiotemporal spread of the 2005 French riots.
We use Geometric Singular Perturbation Theory to study the existence of such solutions in two situations.  The first is when both $u$ and $v$
diffuse at a very small rate.  Here, the time scale over which the bandwagon effect is observed plays a key role.  The second case we consider is when
the tension diffuses at a much slower rate than the level of unrest.  In this case, we are able to deduce that the driving dynamics are modeled by the well-known
Fisher-KPP equation. 
}
\end{abstract}
\maketitle

\begin{center}  Marzieh Bakhshi \textsuperscript{a},  Anna Ghazaryan \textsuperscript{a},  Vahagn Manukian \textsuperscript{a,b}, Nancy Rodriguez\textsuperscript{c}\\
 \end{center}
 
  \textsuperscript{a} \address{
  Department of Mathematics, Miami University, 301 S. Patterson Ave,   Oxford, OH 45056 USA,
  }  
 \email{bakhshm2@miamioh.edu, ghazarar@miamioh.edu }
 \par
 \textsuperscript{b} \address{ Department of Mathematical and Physical Sciences, Miami University, 1601 University Blvd, Hamilton, OH 45011 USA,}  \email{manukive@miamioh.edu} \par
 \textsuperscript{c} \address{Department of Applied Mathematics,
Engineering Center, ECOT 225
526 UCB
Boulder, CO 80309-0526 USA,}  \email{rodrign@colorado.edu}

\keywords {\textbf{Keywords}:  riots,  traveling  front,  Fisher equation, KPP equation, geometric singular perturbation theory. }

\textbf{AMS Classification:}
35Q91, 
35B25, 
35K57, 
35B36, 
34D15 

\section{Introduction}

Civil unrest, protests, and rioting are tools that populations use
to express objection or dissent towards an idea or  action, usually political.  These outbursts
of social activity have been ubiquitous in time and space and, in many cases, have changed the 
course of history.  From the religious protest in the early sixteenth century to the recent George Floyd protests \cite{Cheung}, 
which have engulfed the United States, these outbursts of activity amplify in time and have an underlying field of ``tension" driving them. 
In \cite{BJPN}, the authors introduce a reaction-diffusion model for the dynamics of rioting activity (or unrest) and
social tension, motivated by the 2005 French riots.  The model assumes a bandwagon effect on the level of unrest that turns on when the social tension is above a 
certain threshold
value.  Moreover, this model assumes a nearest-neighbor spread, in other words the spatial contagion is local and modeled by the classical diffusion operator. 
Some robust features observed in these social outbursts are the temporal up-and-down dynamics and, in cases like the 2005 French riots or the Velvet Revolution of 2018 in Armenia,
the spatial spread of the activity.  
These features have been observed in the data and 
can be expressed mathematically as the existence of traveling wave solutions.  

The system introduced in \cite{BJPN} has two regimes that are of 
interest, which can be represented by a parameter $p$.  The case when $p<0$ is known as the {\it tension enhancing} and the case when $p>0$ is known as the {\it tension inhibitive} \cite{BRR}.  These regimes are
characterized by the fact that the unrest has  a positive
or negative feedback on the social tension, respectively.   
The former case leads to a monotone system where classical techniques can provide significant insight into the
model dynamics, such as the existence and stability of traveling wave solutions \cite{VVV}. In this regime the traveling waves are monotone and thus 
do not present the temporal up-and-down dynamic feature observed in real life.  On the other hand, the {\it tension inhibitive} case loses monotonicity and is thus more challenging to
analyze.  However, this case does lead to the existence of non-monotone traveling wave solutions, which were explored
numerically in \cite{Yang}.  

In this work, we prove the existence of traveling wave solutions using Geometric Singular Perturbation theory  \cite{Jones94, kuehn}  in two 
sub-regimes of the tension inhibitive case.  We first consider the regime when the spatial spread of the level of unrest and the social tension
are small.  In this case, the parameter that sets the timescale over which the bandwagon effect would be observed, denoted by $\omega$, plays a key role in the analysis.  
Specifically, we consider the singular limits as $\omega\to 0$ and $\omega\to \infty$ to find the appropriate heteroclinic orbits.  We then use the theory of rotated vector fields \cite{Perko} for the
intermediate values of $\omega$.  We shall see in Section \ref{VDL} that in the limit as $\omega\to 0$ the dynamics of the system are driven by the dynamics of $u$ and evolve slowly along the $v$-nullcline, see Figure \ref{fig:fronts}.    Recall that the
time scale over which the bandwagon effect is observed is given by $\frac{1}{\omega}$, which goes to $\infty$ as $\omega\to 0$.  Thus, we expect that the dynamics of the level of unrest to dominate here.
On the other hand, as $\omega\to \infty$, the dynamics of the system are driven by the dynamics of $v$ and evolve slowly along the $u$-nullcline, see Figure \ref{fig:fronts2}.  Of course, here 
the time scale $\frac{1}{\omega}\to 0$ as $\omega\to \infty$ and the dynamics of the system are driven by the social tension.

The second case we consider is when the social tension diffuses at a much slower rate than the level of unrest.  Interestingly, the dynamics here
can be reduced to a Fisher-KPP type equation for the level of unrest.  
The case $p=0$ was analyzed in \cite{Yang} and decouples the dynamics between the level of unrest and social tension.  
In this case, the equation for the level of unrest also reduced to a Fisher-KPP equation with the social tension being equal to one.  The situation here is a bit different
as $v$ is a function of $u$, specifically $v=(1+u)^p$.  
Fisher-KPP equations have been found to model a wide range of biological phenomena, ranging from its original application in population genetics \cite{Fisher} to population dynamics in ecology \cite{Holmes} and 
wound healing \cite{Sherratt}.  Moreover, these type of equations are understood well from a mathematical point of view, see for example \cite{Ablowitz, Griffiths, KPP}.  Due to its ubiquity, the Fisher-KPP equation seems to be as fundamental to biology, ecology, and sociology, as the Navier-Stokes equation is to physics.   
A recent example that supports this is due to Berestycki, Roquejoffre, and Rossi (\cite{BRoqR}) who studied a classical epidemic SIR model with diffusion and with an additional compartment of infected individuals
traveling on a line with fast diffusion.  Interestingly, a classical transformation reduces the proposed model to a Fisher-KPP type equation.  This provides evidence that these seemingly 
different models, with very different source terms, are fundamentally related.  Our work provides additional evidence that the Fisher-KPP equation is fundamental in social applications.

{\it Outline:} We present the model and background information in Section \ref{background}.  In Section \ref{TW} we discuss the type of solutions that we seek and the 
model formulation that we use for each of the two cases to be considered.  In Section \ref{VDL} we discuss the vanishing diffusion limit case. In Section \ref{KPP} we
consider the reduction of the model of study to the Fisher-KPP equation and prove the existence of traveling wave solutions. We conclude with some numerical experiments 
in Section \ref{NR}.

\subsection{The model}\label{background}
Much research has led to the belief that certain external events are responsible for initiating a period of unrest \cite{Newburn2014},
the so-called {\it triggering events.}
However, 
one must also take into account long-established frustrations, which can play a role
in the intensity and duration of these social outbursts \cite{Lipsky1968a}.  This leads to a dynamic tension field, which is important to understand.  
The system proposed in \cite{BJPN} involves the coupling of an explicit variable representing the intensity of activity and an underlying tension field, as follows:
\begin{eqnarray}\label{e:01}
\left\{\begin{array}{cll}
u_{\tau}&=&d_1 \Delta u+r(v)u(1-u)-\omega u,\\
v_{\tau}&=&d_2\Delta v+1-h(u)v,
\end{array}\right. 
\end{eqnarray} 
satisfied for $\tau>0$ and $x\in \R^n$ and with non-negative initial data.  
The unknown $u$ represents the {\bf level of unrest} and $v$ measures the 
{\bf tension} in a system.
The function $G = u(1-u)$ is of  KPP-type \cite{Fisher} and models
self-excitement (or the so-called bandwagon effect).  This effect is assumed to be 
negligible until the tension $v$ is sufficiently large. This switch mechanism is described by the sigmoid-type function $r.$ 
The effect that $u$ has on $v$  
is modeled by the function $h(u):[0,\infty)\to(0,\infty),$
and is either monotone increasing or decreasing. 
The monotonicity of $h$ determines whether \eqref{e:01} is of
cooperative or activator-inhibitor type.  
For this reason, we refer to \eqref{e:01} in the 
case when $h$ is decreasing as 
a {\it tension enhancing system} and in the case when $h$
is increasing as a {\it tension inhibitive system}.
The specific functions considered are given by:
\begin{equation*}
r(v)=\frac{\Gamma}{1+e^{-\beta(v-\alpha)}}\quad\text{and} \quad h(u)=\theta (1+u)^p. 
\end{equation*}
The model also assumed a nearest neighbor contagion that is modeled by the diffusion terms $d_1\Delta u$ and $d_2 \Delta v$. 
Note that $p<0$ corresponds to the tension-enhancing case and $p>0$ to the tension-inhibitive case.  
Throughout the remainder of the paper we make the assumption that $\alpha=\theta=1$ and that 
$d_1$, $d_2$, $p$, $\Gamma$, $\beta$ are positive parameters.  in particular, we will be working in the
tension inhibitive case.


\section{Constant states and traveling wave solutions}\label{TW}

Our interest lies in studying planar traveling wave solutions and thus we can safely consider the one-dimensional version of \eqref{e:01}.  
To study the two distinct parameter regimes discussed above: (i) $d_1, d_2$ small and (ii) $d_2 \ll d_1,$ we view system \eqref{e:01} from different angles.  In the former case, we  
rename $\Gamma/\omega=\gamma$ and recast \eqref{e:01} as:
\begin{eqnarray}\label{e:02stationary}
\left\{\begin{array}{cll}
u_{\tau}&=&d_1 u_{xx}+\omega\left(\frac{\gamma}{1+e^{-\beta(v-1)}}u(1-u)- u\right),\\
v_{\tau}&=&d_2v_{xx}+1- (1+u)^pv.
\end{array}\right.
\end{eqnarray} 
For the latter case,   with abuse of notation, we  replace the time variable $\tau$ with  $ \omega \tau$ and  spatial variable $x$ with $\sqrt{\omega} x$ and get an equivalent system:
\begin{eqnarray}\label{e:02stationary_2}
\left\{\begin{array}{cll}
u_{\tau}&=&d_1 u_{xx}+\frac{\gamma}{1+e^{-\beta(v-1)}}u(1-u)- u,\\
v_{\tau}&=&d_2v_{xx}+\frac{1}{\omega}\left(1- (1+u)^pv\right).
\end{array}\right. 
\end{eqnarray} 

To find the constant states of \eqref{e:01} (equivalently of \eqref{e:02stationary} and \eqref{e:02stationary_2}), we solve the  system of algebraic equations: 
\begin{equation*}
\frac{\gamma}{1+e^{-\beta(v-1)}}u(1-u)- u=0, \quad 1- (1+u)^pv=0.
\end{equation*}
As illustrated in Fig.~ \ref{fig:fronts1}, there are two physically relevant constant states: $A(0,1)$ and $B (\bar{u},\bar{v})$, where $\bar{u},\bar{v}>0$.  More precisely, 
 $\bar{u}$ is defined as the solution of the transcendental equation:
\begin{equation}\gamma -1 - \gamma u =  e^{-\beta( \frac{1}{(1+u)^p}-1)}\label{baru}\end{equation}
and then
\begin{equation}\bar{v} = \frac{1}{(1+\bar u)^p}.\label{barv}\end{equation}
The constant state $A(0,1)$ is the {\it relaxed} state with no activity and $B(\bar{u},\bar{v})$ is the {\it excited} state with a positive level of activity.  
\begin{figure}[t]
\centering\begin{minipage}[c]{0.38\textwidth}
\scalebox{0.16}{ \includegraphics{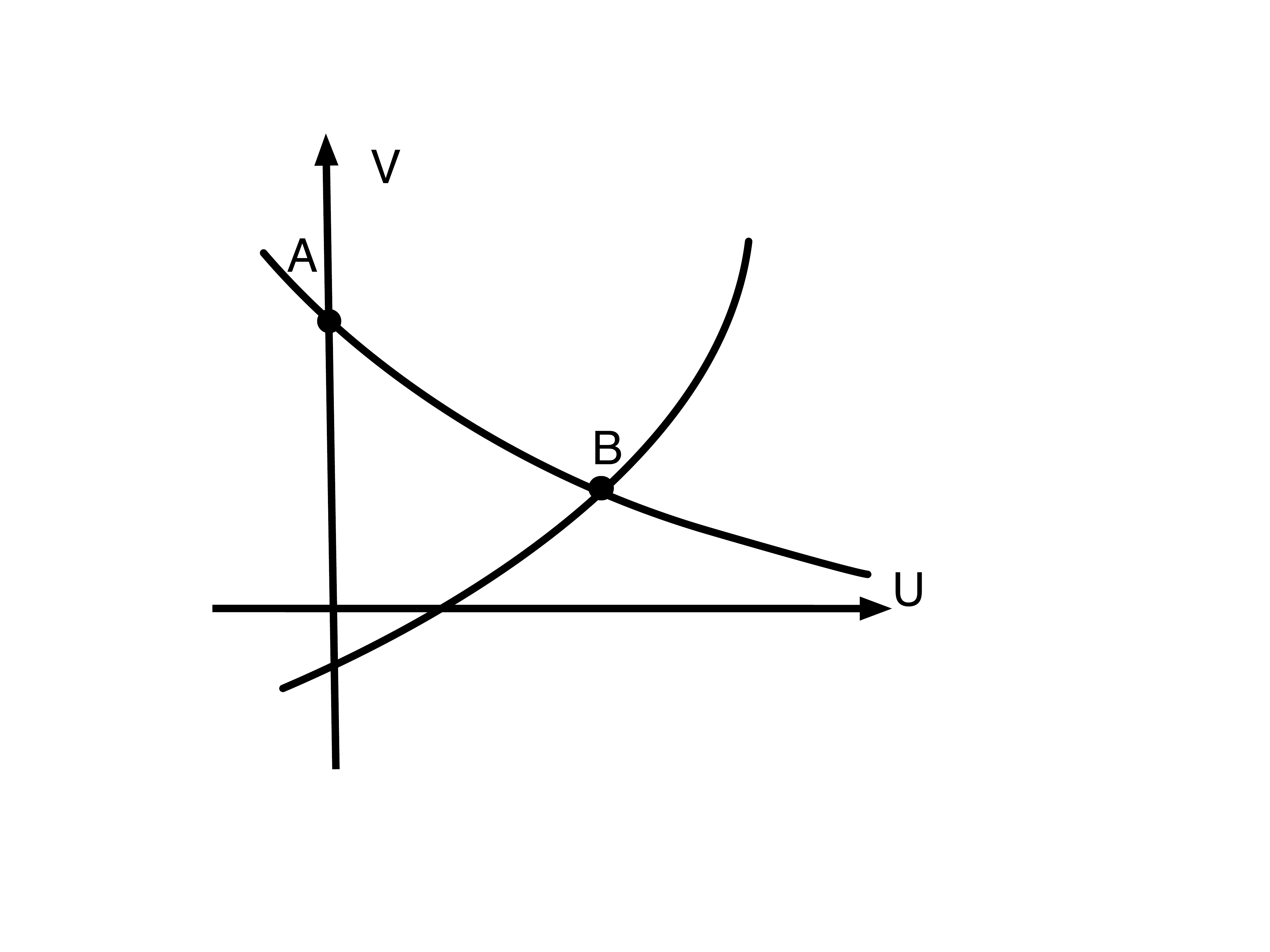}}\end{minipage}
\hspace{0.4in}
\begin{minipage}[c]{0.38\textwidth}
\scalebox{0.16}{ \includegraphics{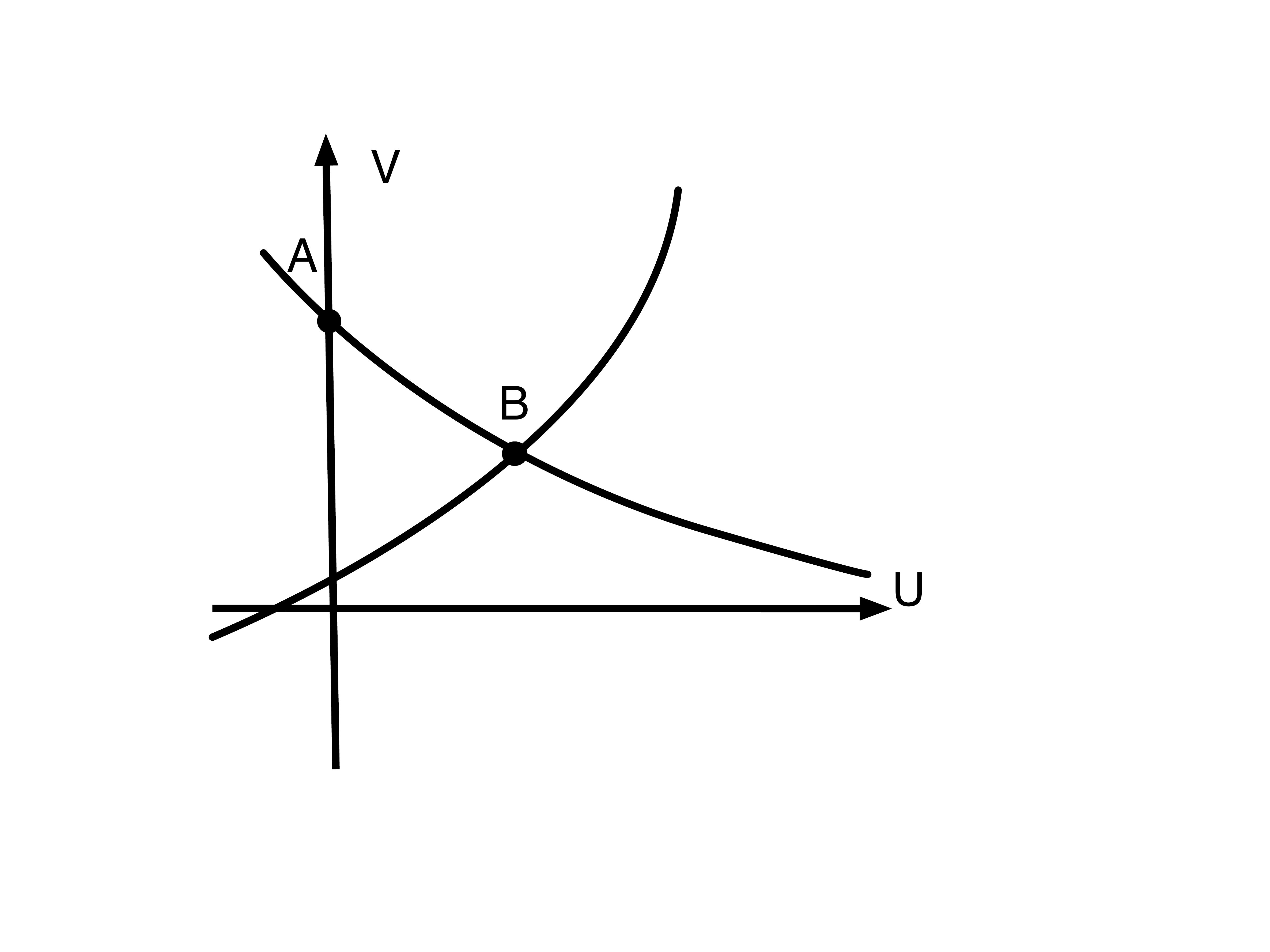}}\end{minipage}
\caption{The constant states of system \eqref{e:01}  are  the points of the intersection of the nullclines illustrated  for $p>0$, $\gamma>2$ with: (left panel)  $\gamma-1-e^\beta>0$  and  (right panel) $\gamma-1-e^\beta<0$ . 
}
\label{fig:fronts1}
\end{figure}
To study traveling wave solutions, it is convenient to introduce a moving coordinate frame $\xi=x-c\tau$, where $c$ is the propagating speed of the front.  Note that due to the symmetry $(c,\xi) \leftrightarrow(-c, -\xi)$, it is enough to 
consider $c>0.$ In
the new variable $\xi=x-c\tau$, the system given by \eqref{e:01} reads as follows:
\begin{eqnarray}\label{e:02a}
\left\{\begin{array}{l}
u_{\tau}=d_1 u_{\xi\xi}+cu_\xi+r(v)u(1-u)-\omega u,\\
v_{\tau}=d_2v_{\xi\xi}+cv_\xi+1-h(u)v.
\end{array}\right.
\end{eqnarray} 

Traveling wave solutions do not change their profile in time, so the corresponding traveling wave ODE system to \eqref{e:01} is given by:
\begin{eqnarray}\label{e:tw}
\left\{\begin{array}{l}
0=d_1 u_{\xi\xi}+cu_\xi++r(v)u(1-u)-\omega u,\quad\text{for}\; \xi \in \R,\\
0=d_2v_{\xi\xi}+cv_\xi+1-h(u)v,\hspace{62pt}\text{for}\;\xi \in \R,\\
(u(-\infty), v(-\infty)) = B (\bar{u},\bar{v})\quad\text{and}\quad (u(\infty), v(\infty)) = A(0,1),
\end{array}\right.
\end{eqnarray} 
where we have used the notation $u(\pm\infty)=\lim_{\xi\to\pm\infty} u(x)$ and $v(\pm\infty)=\lim_{\xi\to\pm\infty} v(x)$.

%
%

\section{Vanishing diffusion limit \label{VDL}}
In this section, we consider the case when $d_1,d_2\ll 1$.  
Here we study the traveling wave ODE system corresponding to \eqref{e:02stationary}, which reads as: 
\begin{eqnarray}\label{e:03}
\left\{\begin{array}{l}
0=d_1 u_{\xi\xi}+cu_\xi+\omega\left(\frac{\gamma}{1+e^{-\beta(v-1)}}u(1-u)- u\right),\\
0=d_2v_{\xi\xi}+cv_\xi+1- (1+u)^pv.
\end{array}\right.
\end{eqnarray} 
We will consider  \eqref{e:03} as a singular perturbation of a related vanishing diffusion limit. 
To reflect that both diffusion coefficients  $d_1$  and $d_2$ are small and comparable parameters,  we introduce the following notation:  \begin{equation} d_1=\epsilon, \mbox{ where }   0< \epsilon \ll 1  \mbox{ and }  d_2=\mu d_1,  \mbox{ where } 0<\mu=O(1).\end{equation}   
The corresponding version of \eqref{e:02a} and \eqref{e:03} are as follows: 
\begin{eqnarray}\label{e:02pde}
\left\{\begin{array}{l}
u_{\tau}=\epsilon u_{\xi\xi}+cu_\xi+\omega\left(\frac{\gamma}{1+e^{-\beta(v-1)}}u(1-u)- u\right),\\
v_{\tau}=\epsilon \mu v_{\xi\xi}+cv_\xi+1- (1+u)^pv.
\end{array}\right.
\end{eqnarray} 
and 
\begin{eqnarray}\label{e:04}
\left\{\begin{array}{l}
0=\epsilon u_{\xi\xi}+cu_\xi+\omega\left(\frac{\gamma}{1+e^{-\beta(v-1)}}u(1-u)- u\right),\\
0=\epsilon \mu v_{\xi\xi}+cv_\xi+1- (1+u)^pv.
\end{array}\right.
\end{eqnarray} 
To prove the existence of a traveling wave solution $(u,v,c)$ which satisfies \eqref{e:tw},
we use Applied Dynamical Systems techniques. More precisely, when $\epsilon \ll 1$
the dynamical system associated to the ODE system \eqref{e:04}   is a singular perturbation of a lower-dimensional  dynamical system, therefore it is natural to use Geometric Singular Perturbation theory.  
We seek traveling fronts  of equation \eqref{e:02pde}  as heteroclinic  orbits for the  first-order system: 
\begin{eqnarray}\label{e:05s}
\left\{\begin{array}{lll}
\frac{du_1}{d\xi}&=& u_2,\\
\epsilon\frac{du_2}{d\xi}&=&-c u_2-\omega\left(\frac{\gamma}{1+e^{-\beta(v_1-1)}}u_1(1-u_1)- u_1\right),\\
\frac{dv_1}{d\xi}&=& v_2,\\
\epsilon   \mu  \frac{dv_2}{d\xi}&=&-cv_2- 1+ (1+u_1)^pv_1.
\end{array}\right.
\end{eqnarray} 
We call system \eqref{e:05s} a slow system, as opposed to the fast system that is obtained from \eqref{e:05s} through the scaling $\zeta=\xi/\epsilon$: 
\begin{eqnarray}\label{e:05f}
\left\{\begin{array}{lll}
\frac{du_1}{d\zeta}&=&\epsilon u_2,\\
\frac{du_2}{d\zeta}&=&-c u_2-\omega\left(\frac{\gamma}{1+e^{-\beta(v_1-1)}}u_1(1-u_1)- u_1\right),\\
\frac{dv_1}{d\zeta}&=& \epsilon v_2,\\
\mu \frac{dv_2}{d\zeta}&=&-c v_2- 1+ (1+u_1)^pv_1.
\end{array}\right.
\end{eqnarray} 

We  next consider the limit of the systems \eqref{e:05s} and \eqref{e:05f} as  $\epsilon\to 0$. Since  $\mu=O(1)$, then $\mu \epsilon \to 0$ as well.  In this limit, system \eqref{e:05s} produces a description of the set  that the solution belongs to
\begin{equation} \label{e:09}
M_0=\left\{(u_1,u_2,v_1,v_2)|u_2=-\frac{\omega}{c}\left(\frac{\gamma}{1+e^{-\beta(v_1-1)}}u_1(1-u_1)- u_1\right),  v_2= \frac{1}{c}(-1+(1+u_1)^pv_1)\right\}.
\end{equation}
On $M_0,$  the dynamics of the slow variables $u_1$ and $v_1$  are given  by: 
\begin{eqnarray}\label{e:10_0}
\left\{\begin{array}{lll}
\frac{du_1}{d\xi}&=&-\frac{\omega}{c}\left(\frac{\gamma}{1+e^{-\beta(v_1-1)}}u_1(1-u_1)- u_1\right),\\
\frac{dv_1}{d\xi}&=& -\frac{1}{c}(1-(1+u_1)^pv_1). 
\end{array}\right.
\end{eqnarray} 
The  set $M_0$ 
also serves as a set of equilibrium points for \eqref{e:05f} with $\epsilon=0$, 
\begin{eqnarray}\label{e:11}
\left\{\begin{array}{lll}
\frac{du_1}{d\zeta}&=&0,\\
\frac{du_2}{d\zeta}&=&-c u_2-\omega\left(\frac{\gamma}{1+e^{-\beta(v_1-1)}}u_1(1-u_1)- u_1\right),\\
\frac{dv_1}{d\zeta}&=& 0,\\
\mu \frac{dv_2}{d\zeta}&=&-c v_2- 1+ (1+u_1)^pv_1.
\end{array}\right.
\end{eqnarray} 
The linearization of  \eqref{e:11} about any point of the set $M_0$, defined in \eqref{e:09}, has two zero eigenvalues and two eigenvalues  equal to $-c$.
Therefore,  the set $M_0$ is a normally hyperbolic  and an  attracting set. By the Fenichel's invariant manifold theory \cite{Fenichel79, Jones94} 
there exists an $\epsilon$-order perturbation of  $M_0,$ which is an invariant manifold for \eqref{e:05s}, equivalently for \eqref{e:05f}:
\begin{equation} \label{e:09eps}
M_{\epsilon}=\left\{(u_1,u_2,v_1,v_2)|u_2=-\frac{\omega}{c}\left(\frac{\gamma u_1(1-u_1)}{1+e^{-\beta(v_1-1)}}- u_1\right) +O(\epsilon),  v_2= \frac{1}{c}(-1+(1+u_1)^pv_1)+O(\epsilon) \right\}.
\end{equation}
On that manifold the flow  generated by \eqref{e:05s} is then an $\epsilon$-order perturbation  of the flow \eqref{e:10_0},
\begin{eqnarray}\label{e:10_0e}
\left\{\begin{array}{l}
\frac{du_1}{d\xi}=-\frac{\omega}{c}\left(\frac{\gamma}{1+e^{-\beta(v_1-1)}}u_1(1-u_1)- u_1\right)+O(\epsilon),\\
\frac{dv_1}{d\xi}= -\frac{1}{c}(1-(1+u_1)^pv_1)+O(\epsilon), 
\end{array}\right.
\end{eqnarray} 
so the slow dynamics of \eqref{e:05s} is restricted to the two-dimensional set \eqref{e:09eps}.
The nullclines of the planar system \eqref{e:10_0} are given by: 
\begin{equation}\label{nullclines}
 u_1=1-\frac{1}{\gamma}(1+e^{-\beta(v_1-1)}), \quad u_1=0, \quad v_1=\frac{1}{(u_1+1)^p}.
\end{equation}
Note that there are no equilibrium solutions in the open first quadrant when $\gamma\le 2$, therefore we will only consider the case when $\gamma>2$.
When $\gamma>2$, there are two relevant equilibria:
$A=(0,1)$ and $B=(\bar{u},\bar{v})$, where the components of $B$ are described in \eqref{baru}-{\eqref{barv}. 
In \cite[Theorem 2.1]{Yang} it is proved that  in the system:
\begin{eqnarray}\label{e:00}
\left\{\begin{array}{l}
u_{t}= \frac{\gamma}{1+e^{-\beta(v-1)}} u(1-u)- u,\\
v_{t}= 1-(1+u)^pv
\end{array}\right.
\end{eqnarray}
the non-trivial steady state $(\bar{u},\bar{v})$ with positive components is globally stable in the open first quadrant.  The system \eqref{e:00} is a scaled version of \eqref{e:10_0} with reversed dynamics.  
The global stability of $(\bar{u},\bar{v})$  in \eqref{e:00} implies global stability of the corresponding equilibrium $(\bar{u},\bar{v})$ in \eqref{e:10_0}  in reversed ``time"  $\xi$.

The linearization of the vector field generated by  \eqref{e:10_0} at the equilibrium $A$ has the  eigenvalues $\lambda_1=1$ and $\lambda_2=\frac{2-\gamma}{2} \omega$, so  $A$ is a saddle when $\gamma>2$ and it is a node when $\gamma<2$.   
The global stability of $B=(\bar{u},\bar{v})$ in reversed ``time"  $\xi$ implies that for $\gamma >2$ the equilibria $A$ 
and $B$ are connected along the stable manifold of the saddle $A$. We give a detailed geometric description of the structure of this orbit below.

For brevity we introduce the following notation: 
\begin{eqnarray}
\left\{\begin{array}{l}
f_1(u_1,v_1)=-\left(\frac{\gamma}{1+e^{-\beta(v_1-1)}}u_1(1-u_1)- u_1\right),\\
 f_2(u_1,v_1)=-(1-(1+u_1)^pv_1),
\end{array}\right.
\end{eqnarray} 
thus \eqref{e:10_0} now reads as follows: 
\begin{eqnarray}
\left\{\begin{array}{l}
\frac{du_1}{d z}= \frac{\omega}{c}f_1(u_1,v_1),\\
\frac{dv_1}{d z}=  \frac{1}{c} f_2(u_1,v_1).
\end{array}\right.
\end{eqnarray}
The eigenvalues of the  linearization  of \eqref{e:10_0} at $B$ are as follows:
\begin{equation}\label{evw1}
\lambda_{1,2}(B)=\frac{1}{2c}\left( f_{2v_1}(B)+\omega f_{1u_1}(B)\pm\sqrt{( f_{2v_1}(B)-\omega f_{1u_1}(B))^2+4\omega f_{1v_1}(B)f_{2u_1}(B)}\right)
\end{equation}
where
\begin{eqnarray}
\left\{\begin{array}{cll}
f_{1u_1}(B)&=&\frac{\gamma-1-e^{-\beta(\bar{v}_1-1)}}{1+e^{-\beta(\bar{v}_1-1)}} = \frac{\bar{u}_1}{1-\bar{u}_1},\\
f_{1v_1}(B)&=&-\frac{ \bar{u}_1\beta e^{-\beta(\bar{v}_1-1)}}{1+e^{-\beta(\bar{v}_1-1)}} 
= -\beta \bar{u}_1 \left(\gamma -\frac{1}{1-\bar{u}_1}\right),\\
f_{2u_1}(B)&= &p(1+\bar{u}_1)^{p-1}\bar{v}_1 =  \frac{p}{1+\bar{u}_1},\\
f_{2v_1}(B)&=&(1+\bar{u}_1)^p.
\end{array}\right.
\end{eqnarray}
Since $\gamma>2$ and  $\bar{v}_1>0$, it is easy to see  that $f_{1u_1}(B)>0$,  $f_{2u_1}(B)>0$, $f_{2v_1}(B)>0$ and $f_{1v_1}(B)<0$, and so the equilibrium $B$ is an unstable node. 
The eigenvalues \eqref{evw1} may be real or complex depending on the parameters of the system.
Note that 
\begin{equation}
f_{2v_1}(B)+\omega f_{1u_1}(B)>0
\end{equation}
and the expression under the root sign in \eqref{evw1} becomes zero at the points:
\begin{equation}
\begin{array}{lll}
\omega_{1}&=&\frac{f_{1u}f_{2v} - 2f_{1v}f_{2u} -2\sqrt{-f_{1u}f_{1v}f_{2u}f_{2v} + f_{1v}^2f_{2u}^2}}{f_{1u}^2},\\
\omega_{2}&=&\frac{f_{1u}f_{2v} - 2f_{1v}f_{2u} + 2\sqrt{-f_{1u}f_{1v}f_{2u}f_{2v} + f_{1v}^2f_{2u}^2}}{f_{1u}^2}.
\end{array}
\end{equation}

From $f_{1v_1}(B)<0$ it follows  that $\omega_2>0$ and $\omega_2>\omega_1$. Since for small $\omega$ both  eigenvalues $\lambda_{1,2}(B)$ are positive, then $\omega_1>0$. Therefore, $\lambda_{1,2}(B)$ are:
\begin{itemize}
\item positive for $\omega\in(0,\omega_1)\cup (\omega_1,\infty);$
\item complex with positive real part for $\omega\in(\omega_1,\omega_2)$.
\end{itemize}

To analyze the dynamics of  the system \eqref{e:10_0} we consider separately  the  cases when $\omega \ll1$ and  $\omega \gg1$, and then discuss the situation of the intermediate values of  $\omega$.
In the first case, when $\omega \ll1$ the following theorem holds. 

\begin{Theorem} Assume    that $\gamma>2$,  $\mu>0$ and $c>0$ are fixed parameters. Assume also that $0<\epsilon \ll \omega$. There exists $\omega_0>0$ such that for any $0<\omega<\omega_0$, there is $\epsilon_0 =\epsilon(\omega) >0$ such that for  any $\epsilon <\epsilon_0$ in the system \eqref{e:05s}. Equivalently, for the system \eqref{e:05f}, there exists a heteroclinic  orbit connecting $(0,0,1,0)$ and $(\bar{u},0,\bar{v},0).$ Thus, for 
 \eqref{e:02pde} there exists a translationally invariant  family of fronts that have the constant states $A=(0,1)$ and $B=(\bar{u},\bar{v})$ as their rest states. \end{Theorem}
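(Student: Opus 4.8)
The plan is to exhibit the traveling front as a heteroclinic orbit that lies entirely on the attracting slow manifold, thereby collapsing the four-dimensional problem \eqref{e:05s} to a planar phase-plane question that the discussion preceding the theorem has in effect already settled in the singular limit $\epsilon=0$.

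First I would invoke Fenichel's theorem \cite{Fenichel79, Jones94}. The critical set $M_0$ in \eqref{e:09} has already been shown to be normally hyperbolic and attracting, with nonzero normal eigenvalues that are negative and bounded away from the imaginary axis uniformly in $\omega$. Hence for $\epsilon$ small there is a locally invariant manifold $M_\epsilon$ as in \eqref{e:09eps}, $O(\epsilon)$-close to $M_0$, carrying a flow that is an $O(\epsilon)$, $C^1$-perturbation of the planar system \eqref{e:10_0}, and $M_\epsilon$ possesses an exponentially attracting stable foliation. Next I would note that the lifted rest states $(0,0,1,0)$ and $(\bar u,0,\bar v,0)$ are \emph{exact} equilibria of \eqref{e:05s} for every $\epsilon\ge 0$ and lie on $M_0$, hence on $M_\epsilon$. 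Since $\gamma>2$, in the reduced flow the first is a saddle (slow eigenvalues near $1$ and $\tfrac{2-\gamma}{2}\omega<0$) and the second is a repeller; choosing $\omega_0$ small and then $\epsilon_0=\epsilon(\omega)$ with $\epsilon\ll\omega$ keeps the $O(\epsilon)$ corrections below these spectral gaps, so this hyperbolic structure persists. Because the repeller sits inside the locally attracting manifold $M_\epsilon$, its entire unstable manifold lies in $M_\epsilon$, so any orbit connecting $(\bar u,0,\bar v,0)$ to $(0,0,1,0)$ must lie in the two-dimensional $M_\epsilon$ — there are no fast segments and no Exchange Lemma is needed.

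Then I would persist the singular connection. At $\epsilon=0$ the reduced flow is exactly \eqref{e:10_0}, and the global asymptotic stability of $B$ in reversed ``time'' — imported from \cite[Theorem 2.1]{Yang} as explained above — forces the one-dimensional stable manifold $W^s(A)$ of the saddle $A=(0,1)$, traced backward in $\xi$, to have $\alpha$-limit $\{B\}$; this is the singular heteroclinic $q_0\subset M_0$ running from $B$ at $\xi\to-\infty$ to $A$ at $\xi\to+\infty$. To retain it for $\epsilon>0$ I would pass to the time-reversed reduced flow on $M_\epsilon$, in which $A$ is still a saddle while $B$ is now a sink, and fix a small neighborhood $U$ of $B$ contained in its basin of attraction (possible by hyperbolicity and $C^1$ linearization). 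The time reversal of $q_0$ is a branch of the unstable manifold of $A$ at $\epsilon=0$, and it enters $U$ after a finite time; along the corresponding compact arc the reduced flow on $M_\epsilon$ is $O(\epsilon)$-close in the $C^1$ topology to that of \eqref{e:10_0}, so for $\epsilon<\epsilon_0$ the unstable manifold of $A$ for the perturbed reversed flow also enters $U$, after which it is trapped and converges to $B$. Reversing ``time'' once more, $W^s_\epsilon(A)$ acquires $\alpha$-limit $\{B\}$: this is precisely a heteroclinic orbit $q_\epsilon\subset M_\epsilon$ of \eqref{e:05s}, equivalently of \eqref{e:05f}, joining $(\bar u,0,\bar v,0)$ and $(0,0,1,0)$.

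Finally, reading off $u(\xi)=u_1(\xi)$ and $v(\xi)=v_1(\xi)$ from $q_\epsilon$ yields a bounded solution of the traveling-wave system \eqref{e:04} with limits $B$ at $\xi=-\infty$ and $A$ at $\xi=+\infty$, i.e.\ a traveling front of \eqref{e:02pde}, and the translates $q_\epsilon(\cdot+s)$, $s\in\R$, furnish the claimed translationally invariant family. I expect the crux to be the persistence step rather than the Fenichel machinery: linking a saddle to a repeller inside a two-dimensional manifold is not forced by any dimension count, so the argument leans essentially on the \emph{global} (not merely local) stability statement borrowed from \cite{Yang} to locate $q_0$, together with the soft trapping-region argument near the repeller to carry it to $\epsilon>0$. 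A secondary technical point is the uniformity of the slow-manifold estimates near $A$, whose contracting slow rate $\tfrac{2-\gamma}{2}\omega$ degenerates as $\omega\to0$; keeping it under control is exactly what dictates the nested smallness $0<\epsilon\ll\omega<\omega_0$ imposed in the hypotheses.
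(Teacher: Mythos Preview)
Your proof is correct, but it takes a different route to the planar heteroclinic than the paper does. After the Fenichel reduction to $M_\epsilon$ (which you both share), the paper carries out a \emph{second} singular perturbation, now in $\omega$: it rescales \eqref{e:10_0} by $\eta=\omega\xi$, identifies the $v_1$-nullcline $\{v_1=(1+u_1)^{-p}\}$ as a normally repelling one-dimensional slow manifold at $\omega=0$, finds the one-dimensional connection $\tilde B\to\tilde A$ on that curve, and then perturbs in $\omega$ via Fenichel again, arguing that because the curve is repelling the stable manifold of the saddle $A$ must remain on it. You bypass this second layer entirely by invoking the global stability result of \cite{Yang} (quoted in the preamble but not used inside the paper's proof) to locate $q_0$ directly, and then persist it with a reversed-time basin-trapping argument at the source $B$ rather than the paper's dimension-counting transversality (one-dimensional $W^s(A)$ meeting two-dimensional $W^u(B)$ inside the two-dimensional $M_0$). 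Your argument is shorter and, as you nearly observe, does not actually require $\omega$ to be small for existence --- only $\epsilon\ll\omega$, so that the $O(\epsilon)$ corrections do not swamp the slow eigenvalue $\tfrac{2-\gamma}{2}\omega$ at $A$. What you forfeit is the geometric content the theorem is really after: the paper's nested reduction shows that for $\omega\ll1$ the heteroclinic hugs the $v_1$-nullcline, which is precisely the picture in Figure~\ref{fig:fronts} and the anchor for the rotated-vector-field continuation to intermediate $\omega$ carried out immediately after the two theorems.
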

\begin{proof}
Let us consider system \eqref{e:10_0}
along with a rescaled version of it, in terms of the variable $\eta = \omega \xi$, 
\begin{eqnarray}\label{e:10eta}
\left\{\begin{array}{cll}
\frac{du_1}{d\eta}&=&-\frac{1}{c}\left(\frac{\gamma}{1+e^{-\beta(v_1-1)}}u_1(1-u_1)- u_1\right),\\
\omega \frac{dv_1}{d\eta} &=& -\frac{1}{c}(1-(1+u_1)^pv_1). 
\end{array}\right.
\end{eqnarray}
When  $\omega =0$, the system   \eqref{e:10_0} becomes:
\begin{eqnarray}\label{lin}
\left\{\begin{array}{cll}
\frac{du_1}{d \xi}&=&0,\\
\frac{dv_1}{d \xi}& = & -\frac{1}{c}\left(1-(1+u_1)^pv_1\right).
\end{array}\right.
\end{eqnarray}
On the other hand, when we set  $\omega =0$ in \eqref{e:10eta} 
\begin{eqnarray}\label{e:10eta0}
\left\{\begin{array}{cll}
\frac{du_1}{d\eta}&=&-\frac{1}{c}\left(\frac{\gamma}{1+e^{-\beta(v_1-1)}}u_1(1-u_1)- u_1\right),\\
0 &=& -\frac{1}{c}(1-(1+u_1)^pv_1), 
\end{array}\right.
\end{eqnarray}
 we obtain the manifold to which the solution of this reduced system belongs:
\begin{equation}\label{v1}\left\{ (u_1,v_1):\,v_1=\frac{1}{(1+u_1)^p}\right\}\end{equation}
and the reduced flow on this manifold: 
\begin{equation}\label{u10}
\frac{du_1}{d \eta}=- \frac{1}{c}u_1 \left(\frac{\gamma}{1+e^{-\beta({\frac{1}{(1+u_1)^p}}-1)}}(1-u_1)- 1\right).
\end{equation}
Equation \eqref{u10} has two equillibrium points: $\tilde{A}=0$ and $\tilde{B}=\bar{u}_1$. It is easy to see
 that the linearization of \eqref{lin} about  any point $(u_1,v_1)$ of \eqref{v1}  has a positive eigenvalue $p(1+u_1)^{p-1}/c$ and a zero eigenvalue, so the set \eqref{v1} is normally hyperbolic and repelling. 
  
  The linearization of \eqref{u10} about   $\tilde{A}$  has a negative eigenvalue, while the linearization of \eqref{u10} about   $u_1=\bar{u}_1$ has  a  positive eigenvalue, so      $u_1=0= \tilde{A}$ is a stable node and  $\tilde{B}$ is an unstable node. 
 Therefore, there is an asymptotic connection from $\tilde{B}$ at $-\infty$ to $\tilde{A}$ at $\infty$. Within the one-dimensional slow manifold \eqref{v1}, 
 this intersection is transversal  by the dimension counting. 
 Since the slow manifold  \eqref{v1} is normally hyperbolic, by Fenichel's invariant manifold theory \cite{Fenichel79, Jones94}  it persists when a sufficiently small $\omega$ is introduced, {\it i.e.}, there is an invariant manifold in \eqref{e:10_0} which is also normally repelling and is an $\omega$-order perturbation of \eqref{e:10_0}:
 \begin{equation}\label{v1omega}v_1=\frac{1}{(1+u_1)^p}+O(\omega),\end{equation}
 on which the flow is an $\omega$-order perturbation  of \eqref{u10} given by:
\begin{equation}\label{u10omega}
\frac{du_1}{d \eta}=- \frac{1}{c}u_1 \left(\frac{\gamma}{1+e^{-\beta({\frac{1}{(1+u_1)^p}}-1)}}(1-u_1)- 1\right) +O(\omega).
\end{equation}
Since the set is repelling, the stable manifold of the saddle  $(0,1)$ must stay on the manifold.  This stable manifold  then intersects with the  the unstable manifold of the equilibrium $( \bar u, \bar v)$;  thus, forming a heteroclinic orbit along the  set \eqref{v1omega}. 
In the two-dimensional phase space, the intersection of the one-dimensional stable manifold of the saddle  $(0,1)$ with the two-dimensional  unstable manifold of the node $(\bar u, \bar v)$ is transversal by the dimension counting. 

This geometric construction of a heteroclinic orbit  is performed on the slow manifold $M_0$ of the system  \eqref{e:11}, which was shown above to be normally hyperbolic and attracting. For a sufficiently small $\epsilon>0$,  the slow manifold $M_0$ perturbs to an attracting, two-dimensional invariant set $M_{\epsilon}$. Since $M_{0}$ is attracting, the  two-dimensional unstable manifold of equilibrium  $(\bar u, 0, \bar v, 0)$  is confined to $M_{0}$, and thus any orbit that follows this manifold is also confined to $M_{0}$. Therefore, within $M_{\epsilon}$, the intersection of two-dimensional unstable manifold of equilibrium  $(\bar u, 0, \bar v, 0)$   and   the one-dimensional {\it slow} stable manifold of the equilibrium $(0,0,1,0)$  persists,  forming a ``slow" heteroclinic orbit.

\begin{figure}[t]
\begin{center}$
\begin{array}{cccc}
\includegraphics[width=3in]{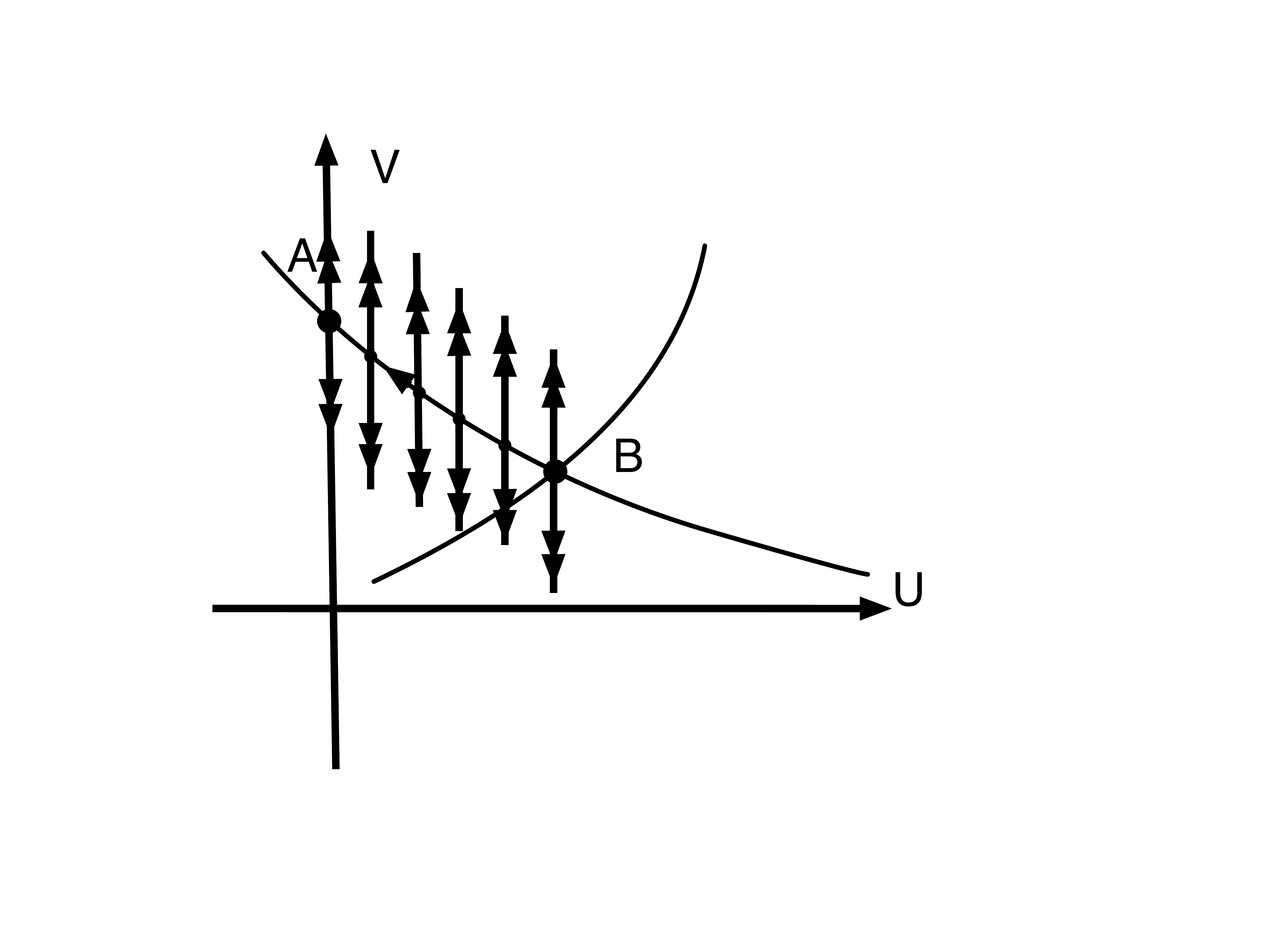} &
\includegraphics[width=3in]{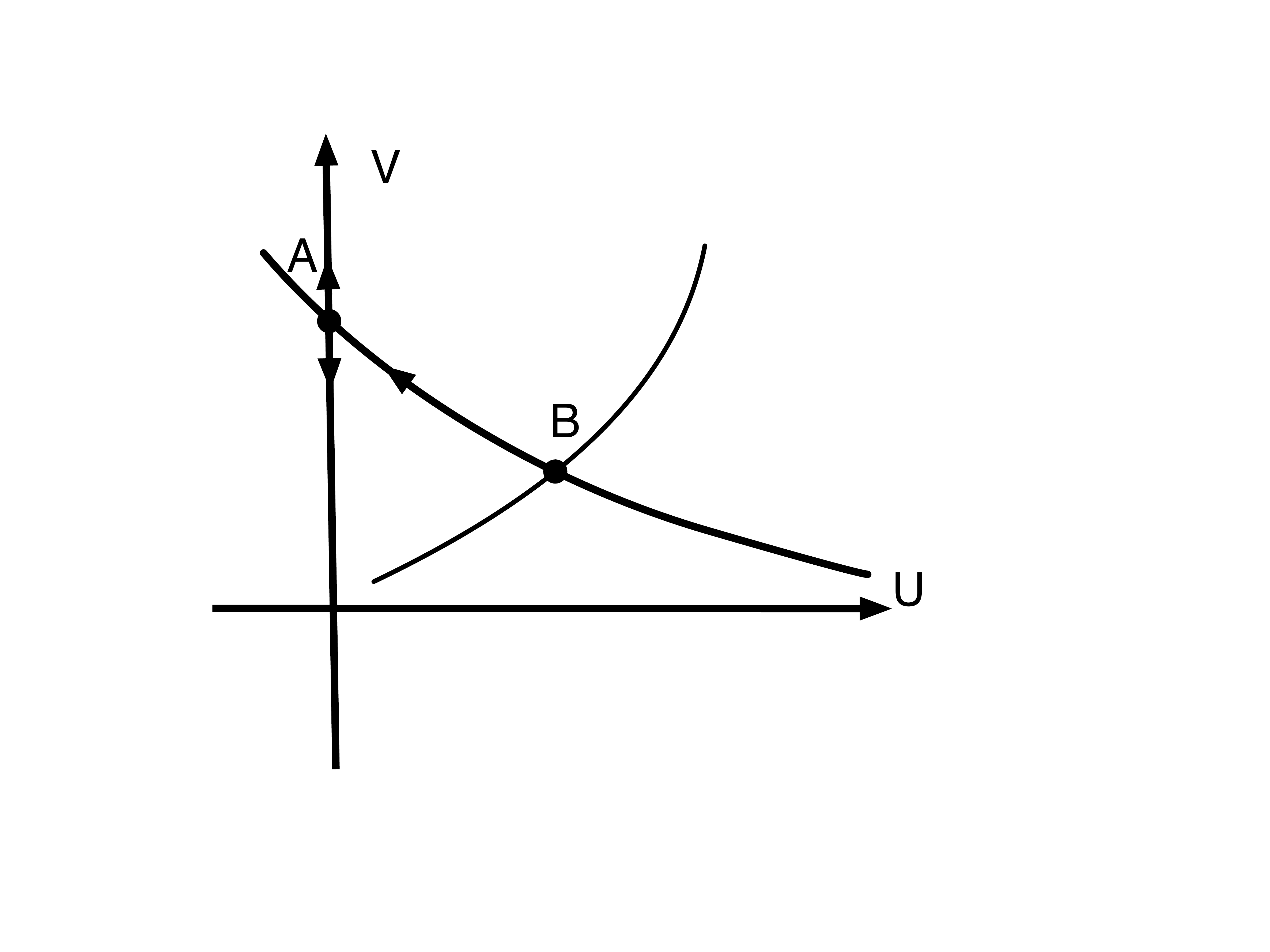} 
\end{array}$
\caption{ The singular limit when:  (left panel) $\omega=0$ and (right panel) $0<\omega\ll1$.}
\label{fig:fronts}
\end{center}
\end{figure}
\end{proof}

In the case of  $\omega \gg1$ the following theorem holds. 

\begin{Theorem} Assume    that $\gamma>2$,  $\mu>0$ and $c>0$ are fixed parameters. Also assume  that $0<\epsilon \ll 1/\omega$. There is  $\omega_0\gg 1$ such that for any $\omega>\omega_0$, there exists  $\epsilon_0 =\epsilon(\omega) >0$ such that for any $\epsilon <\epsilon_0$ in the system \eqref{e:05s}. Equivalently, for the system \eqref{e:05f}, there exists a heteroclinic  orbit connecting $(0,0,1,0)$ and $(\bar{u},0,\bar{v},0).$ Thus, for
 \eqref{e:02pde} there exists a translationally invariant  family of fronts that have the constant states $A=(0,1)$ and $B=(\bar{u},\bar{v})$ as their rest states. \end{Theorem}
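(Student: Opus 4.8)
The plan is to mirror the structure of the previous theorem's proof, but with the roles of the two singular limits interchanged: now it is $\omega\to\infty$ rather than $\omega\to 0$ that drives the reduction. First I would rescale the planar slow flow \eqref{e:10_0} by introducing a fast time adapted to large $\omega$. Writing $\eta=\omega\xi$ in \eqref{e:10_0} gives the system \eqref{e:10eta}, and now the appropriate singular parameter is $1/\omega$: setting $1/\omega=0$ (i.e. $\omega\to\infty$) in \eqref{e:10_0} itself yields the slow-variable equation for $v_1$ with $u_1$ frozen to the critical set, whereas setting $1/\omega=0$ in \eqref{e:10eta} forces $v_1$ onto the $u_1$-nullcline. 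The relevant critical manifold is therefore $\{u_1=0\}\cup\{u_1=1-\frac{1}{\gamma}(1+e^{-\beta(v_1-1)})\}$, and I would restrict attention to the nontrivial branch, call it $v_1\mapsto (u_1^*(v_1),v_1)$ with $u_1^*(v_1)=1-\frac1\gamma(1+e^{-\beta(v_1-1)})$. The reduced (slow) flow on this branch is obtained by substituting $u_1=u_1^*(v_1)$ into the $v_1$-equation of \eqref{e:10_0}, giving a scalar ODE $\frac{dv_1}{d\xi}=-\frac1c\big(1-(1+u_1^*(v_1))^p v_1\big)$.

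Next I would check normal hyperbolicity and the equilibrium structure, exactly as in the previous proof. The linearization of the fast equation ($du_1/d\zeta = \omega f_1(u_1,v_1)$-type, or equivalently the $u_1$-equation in \eqref{e:10_0} with $v_1$ frozen) transverse to the nontrivial branch has eigenvalue $\frac{\omega}{c}\,\partial_{u_1} f_1$ evaluated on the branch; using $f_{1u_1}$ from the formulas already computed in the excerpt one verifies this is nonzero on the relevant portion (it equals $\frac{\omega}{c}\cdot\frac{\gamma-1-e^{-\beta(v_1-1)}}{1+e^{-\beta(v_1-1)}}$, which is $>0$ past the point $B$), so the branch is normally hyperbolic and repelling in the $u_1$-direction. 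On the reduced flow, the two equilibria $\tilde A$ and $\tilde B$ correspond to $A=(0,1)$ (which is \emph{not} on this branch — here I must be careful) and $B=(\bar u,\bar v)$. Since $A$ lies on the trivial branch $u_1=0$ rather than the nontrivial one, the heteroclinic must be assembled from a fast jump/segment governed by the $v_1$-dynamics near $u_1=0$ together with a slow segment along the nontrivial branch; alternatively, and more cleanly, I would invoke the global stability result quoted from \cite[Theorem 2.1]{Yang}: the nontrivial equilibrium $(\bar u,\bar v)$ of \eqref{e:00} is globally asymptotically stable in the open first quadrant, hence $(\bar u,\bar v)$ is globally stable in reversed time $\xi$ for \eqref{e:10_0}, so $A$ and $B$ are connected along the stable manifold of the saddle $A$. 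This connection exists for every $\omega>0$, and the $\omega\to\infty$ analysis is used only to organize the geometry (to see that the orbit tracks the $u_1$-nullcline) and to set up the Fenichel persistence in $\epsilon$.

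Then I would run the Fenichel persistence argument twice, as in Theorem before. In the planar system \eqref{e:10_0}: the eigenvalues $\lambda_{1,2}(B)$ from \eqref{evw1} have positive real part (shown in the excerpt, $B$ is an unstable node or focus), and $A$ is a saddle with one-dimensional stable manifold; by dimension counting in the two-dimensional phase plane the one-dimensional stable manifold of $A$ meets the two-dimensional unstable manifold of $B$ transversally, giving a robust heteroclinic orbit. Next, lift to the four-dimensional system \eqref{e:05s}: the critical manifold $M_0$ of \eqref{e:09} is normally hyperbolic and attracting (the excerpt already establishes its linearization has eigenvalues $0,0,-c,-c$), so by Fenichel theory it perturbs to $M_\epsilon$ as in \eqref{e:09eps} for $0<\epsilon<\epsilon_0(\omega)$. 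Because $M_0$ is attracting, the two-dimensional unstable manifold of $(\bar u,0,\bar v,0)$ is confined to $M_\epsilon$, and within $M_\epsilon$ the intersection with the one-dimensional slow stable manifold of $(0,0,1,0)$ persists, producing the slow heteroclinic orbit; this is the traveling front for \eqref{e:02pde}. The hypothesis $0<\epsilon\ll 1/\omega$ is exactly what makes $\epsilon\mu v_{\xi\xi}$ and $\epsilon u_{\xi\xi}$ genuinely higher-order relative to the $\omega$-scaled reaction terms in \eqref{e:04}, so the order of limits ($\omega\to\infty$ first, then $\epsilon\to 0$) is consistent.

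The main obstacle I expect is the bookkeeping of \emph{which} branch of the critical set carries the connection and how the two endpoints attach to it. Unlike the $\omega\ll 1$ case, where both $\tilde A$ and $\tilde B$ sit on the single slow manifold $v_1=(1+u_1)^{-p}$, here the saddle $A=(0,1)$ lies on the trivial nullcline $u_1=0$ while $B$ lies on (the closure of) the nontrivial nullcline, so the singular heteroclinic is genuinely concatenated and one must argue its transversal persistence with some care — most safely by not over-relying on the singular picture and instead leaning on the global-stability input from \cite{Yang} (valid for all $\omega$) to guarantee the planar connection, then using Fenichel purely for the $\epsilon$-lift. A secondary, routine point is verifying that the $\epsilon_0$ produced by Fenichel's theorem depends on $\omega$ in the claimed way (hence the notation $\epsilon_0=\epsilon(\omega)$), which follows from uniformity of the Fenichel estimates on compact parameter sets once $\omega$ is fixed large.
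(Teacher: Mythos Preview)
Your proposal is essentially correct, and you have identified the right structures; it departs from the paper's proof mainly in how the planar heteroclinic is obtained. The paper carries out the $\delta=1/\omega\to 0$ singular reduction in full: it writes \eqref{e:10o}--\eqref{wd}, identifies both branches $S^1_0=\{u_1=0\}$ (normally attracting) and $S^2_0=\{\gamma(1-u_1)/(1+e^{-\beta(v_1-1)})=1\}$ (normally repelling), computes the reduced slow flow on each, assembles a concatenated singular orbit from a piece on $S^2_0$, a fast jump, and a piece on $S^1_0$, and perturbs via Fenichel in $\delta$ before performing the $\epsilon$-lift. You correctly flag the concatenation across two branches as the delicate step and then propose to sidestep it by invoking the global stability result from \cite{Yang}, which yields the planar connection for \emph{every} $\omega>0$; transversality follows from the dimension count (one-dimensional $W^s(A)$ against the open two-dimensional $W^u(B)$ in the plane), and the $\epsilon$-lift via $M_\epsilon$ then proceeds exactly as in the preceding theorem. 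This is valid and in fact parallels how the paper handles intermediate $\omega$ in the paragraph following the two theorems. What your route gains is a shorter argument that actually delivers the heteroclinic for all $\omega>0$, not just $\omega>\omega_0$; what it gives up is the explicit geometric picture of the orbit tracking the $u_1$-nullcline that the two-branch singular construction provides.

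Two small expository corrections. First, your opening paragraph inverts which limit in \eqref{e:10eta} produces the layer versus the reduced problem: sending $\omega\to\infty$ in \eqref{e:10eta} (equivalently $\delta\to 0$ in \eqref{wd}) freezes $v_1$ and gives the fast $u_1$-flow, while the reduced problem comes from setting $\delta=0$ in \eqref{e:10o}. Second, the Fenichel persistence in $\epsilon$ does not actually require any input from the $\omega\to\infty$ analysis, since $M_0$ is normally hyperbolic (with transverse eigenvalue $-c$) for every fixed $\omega>0$; so your remark that the large-$\omega$ analysis is needed ``to set up the Fenichel persistence in $\epsilon$'' overstates its role.
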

\begin{proof}
We denote $\delta=\frac{1}{\omega}$ and rewrite    \eqref{e:10_0}  as follows:
\begin{eqnarray} \label{e:10o}
\left\{\begin{array}{cll}
\delta \frac{du_1}{d\xi}&=&-\frac{1}{c}\left(\frac{\gamma}{1+e^{-\beta(v_1-1)}}u_1(1-u_1)- u_1\right),\\
\frac{dv_1}{d\xi}&=& -\frac{1}{c}(1-(1+u_1)^pv_1). 
\end{array}\right.
\end{eqnarray}   
We then   introduce $z = \xi/ \delta $  and rewrite  \eqref{e:10o}  as:
\begin{eqnarray}\label{wd}
\left\{\begin{array}{cll}
\frac{du_1}{d z}&=&- \frac{1}{c} u_1\left(\frac{\gamma}{1+e^{-\beta(v_1-1)}}(1-u_1)- 1\right),\\
\frac{dv_1}{d z}&=& \frac{\delta}{c}(-1+(1+u_1)^pv_1).
\end{array}\right.
\end{eqnarray} 
When  $\delta=0$,  the system \eqref{wd} reads as:
 \begin{eqnarray}\label{wd0} 
 \left\{\begin{array}{cll}
\frac{du_1}{d z}&=&-  \frac{1}{c} u_1\left(\frac{\gamma}{1+e^{-\beta(v_1-1)}}(1-u_1)-1 \right),\\
\frac{dv_1}{d z}&=&0.
\end{array}\right.
\end{eqnarray} 
The slow manifold for this system, which is  also the set of equilibrium points for \eqref{wd0}, consists of two one-dimensional sets: a line $S^1_0=\{(u_1,v_1): u_1=0\}$  and  a curve $$S^2_0=\{(u_1,v_1):\frac{\gamma}{1+e^{-\beta(v_1-1)}}(1-u_1)- 1=0\}.$$   
Linearizing about points from each set, we see that $S^1_0$ is normally  attracting and $S^2_0$ is normally repelling.  Each point of $S^1_0$, including $v_1=1$, has a one-dimensional, linear stable manifold. 
The stable manifold of $S^2_0$  is an open subset of the phase space of  the $(u_1,v_1)$- plane. 

\begin{figure}[t]
\begin{center}$
\begin{array}{cccc}
\includegraphics[width=3in]{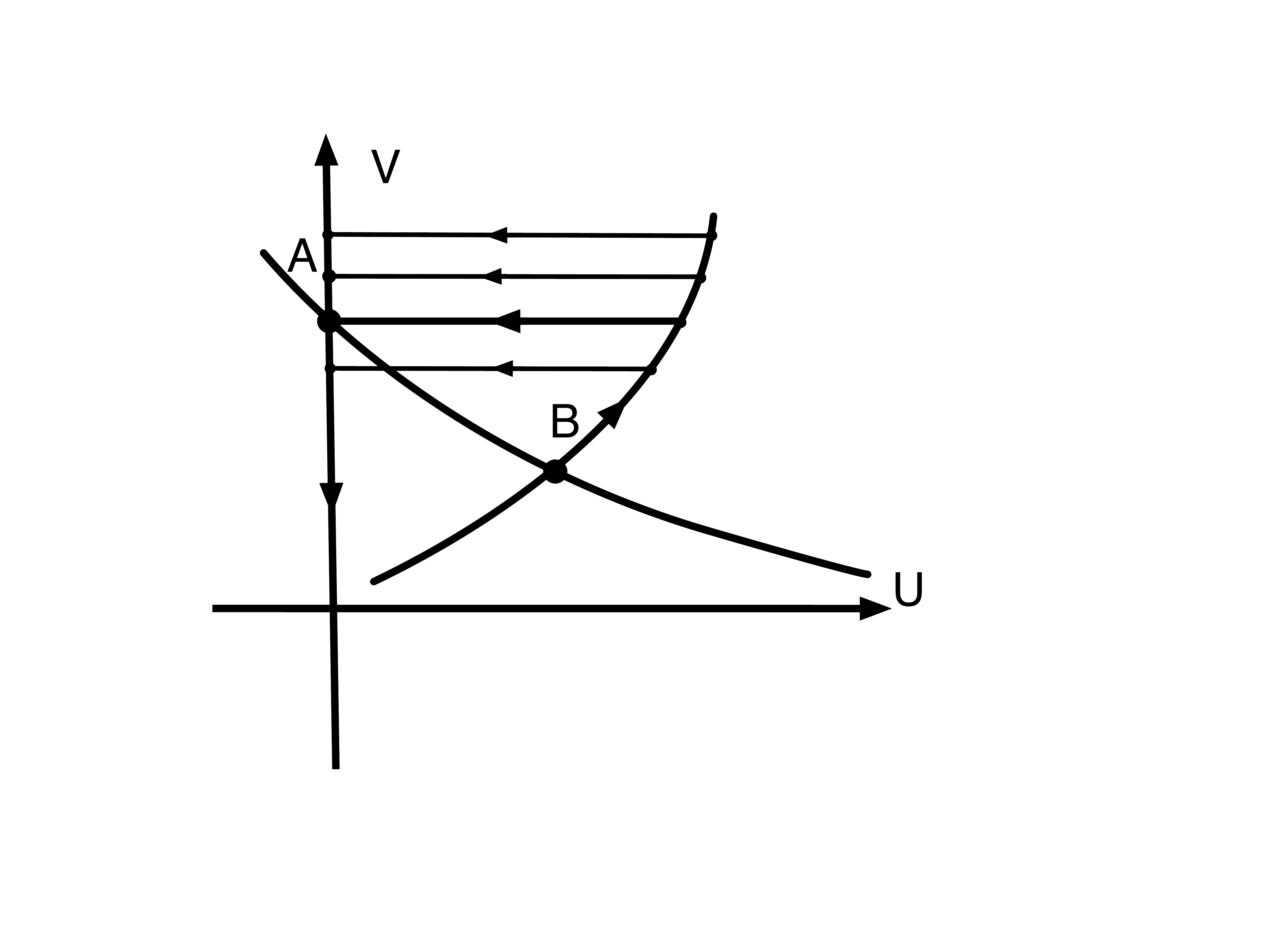} &
\includegraphics[width=3in]{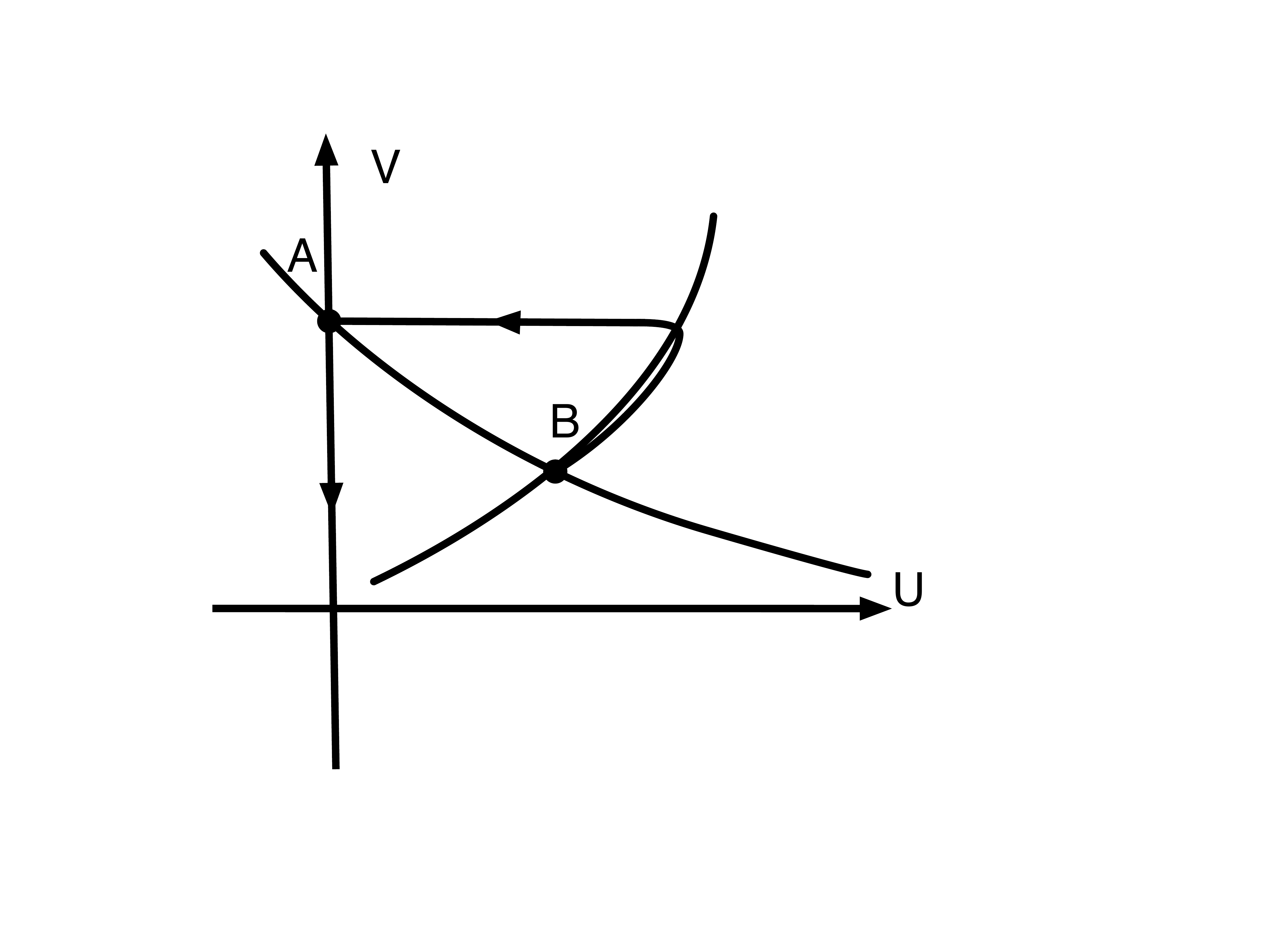} 
\end{array}$
\caption{ A portion of $u_1=0$ is attractive when $\gamma -1-e^{-\beta(v_1-1)}>0$:  the singular limit when $\omega=\infty$ (left panel); the connecting orbit when $ \omega\gg 1$ (right panel).
}
\label{fig:fronts2}
\end{center}
\end{figure}

The reduced flow on  $S^1_0$ is given by: 
\begin{equation} \label{m1v1}
\frac{dv_1}{d z}= -\frac{1}{c}(1-v_1),\\
\end{equation} 
which has exactly one equilibrium, $v_1=1$, that corresponds to the equilibrium $(0,1)$ in the system \eqref{wd}. Within the set $S^1_0$, this equilibrium of \eqref{m1v1} is repelling.  
For sufficiently small $\delta$, the unstable manifold of  the whole set $S^1_0$ perturbs to the two-dimensional unstable manifold of $(0,1)$.
The reduced system on $S^2_0$  is given by the equation:
\begin{equation}\label{v1z}
\frac{dv_1}{d z}=\frac{\delta}{c} \left(-1+\left(2-\frac{1+e^{-\beta(v_1-1)}}{\gamma}\right)^pv_1\right). 
\end{equation}
 The equation \eqref{v1z} has a single equilibrium at $v_1=\bar v$, which  corresponds to the equilibrium $(v_1,u_1)$ in \eqref{wd}.  Within $S^2_0$, this equilibrium is attracting.
 For sufficiently small $\delta$, the stable manifold of  $v_1=\bar v$   perturbs to the one-dimensional stable manifold of $(\bar v,\bar u)$ in \eqref{wd}.

By the dimension counting, the stable manifold of $S^1_0$  intersects the one-dimensional stable manifold of $(\bar v,\bar u)$ transversally; therefore, for sufficiently small $\delta$ in \eqref{wd}, the unstable manifold of  $(0,1)$ and the stable manifold of $(\bar v,\bar u)$ intersect, thus forming a heteroclinic orbit, 
which is a perturbation of the singular orbit depicted in Figure~\ref{fig:fronts}.

The same argument given in case when $\omega\ll 1$  then shows that this heteroclinic orbit  persists  for  the system \eqref{e:05s} or,  equivalently, \eqref{e:05f}   with sufficiently small values of $\epsilon$. 
\end{proof}

The heteroclinic orbits  in the system \eqref{e:10_0} at intermediate values of  $\omega$  may be traced as continuous deformations of the orbits in singular cases, according to the
theory of rotated vector fields \cite{Perko}. 
We consider the angle between the $u$-axis and the vector given by the right hand
side of  \eqref{e:10_0}: 
$\Phi(u, v) = \tan^{-1}\left( \frac{ f_2(u,v)}{\omega f_1(u, v)}\right).$
It is easy to see that
$$\frac{\partial \Phi}{\partial \omega}=  \frac{- f_1(u,v)  f_2(u, v)} {\omega^2 f_1^2(u,v)+ f_2^2(u,v) }. $$

\begin{figure}[t]
\begin{center}
\includegraphics[width=4in]{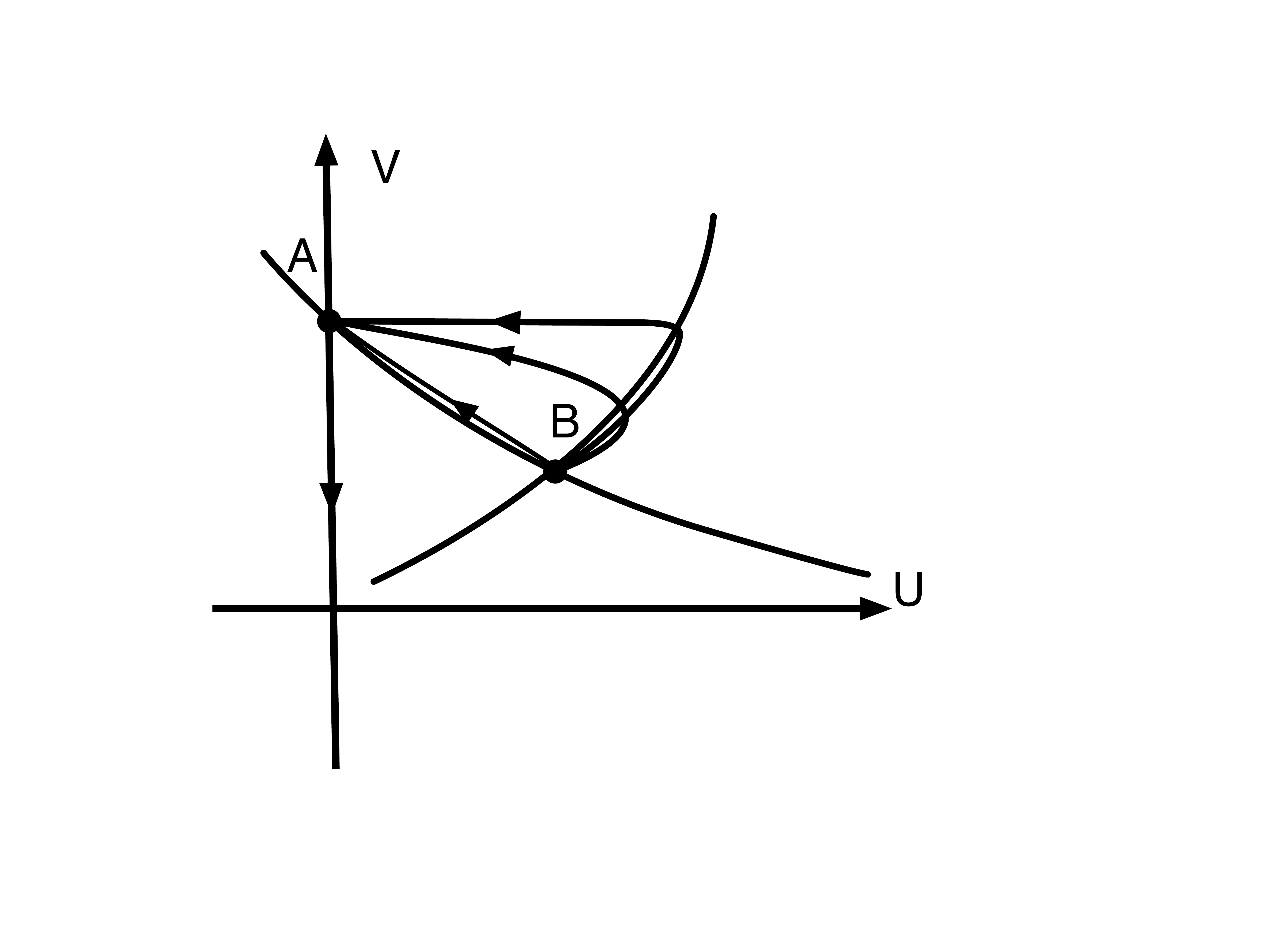} 
\caption{Heteroclinic orbits  for a selection of   $\omega$  values  between zero and infinity.  \label{fig:omegadep}}
\end{center}
\end{figure}

In the region above both nullclines of \eqref{e:10_0}, $f_1<0$ and $f_2 > 0$. Therefore $\frac{\partial \Phi}{\partial \omega} >0$, as $\omega$ decreases  from  infinity to zero, the segment of the
stable manifold $W^s(A)$ of the saddle  $A$ in the described region rotates monotonically  \cite[Section 2]{Perko}, clockwise from its 
limiting position of the singular orbit corresponding to $\omega =\infty$ ($\delta =0$)  to its position of the singular orbit when $\omega =0$.  While   in the region above the both nullclines, $W^s(A)$  for each value of $\omega$ does not cross any of its positions for other values of $\omega$.  
We point out that when $\omega\ll1$, the   vector field points  vertically up  along the  nullcline   $ v_1=\frac{1}{(u_1+1)^p}$  in the region above the nullcline $ u_1=1-\frac{1}{\gamma}(1+e^{-\beta(v_1-1)})$. This implies that the orbits which are small perturbations of the singular orbit  with $\omega =0$  stay above $ v_1=\frac{1}{(u_1+1)^p}$ as they never can cross the this nullcline.  
 On the other hand,    the   vector field allows the orbits to cross the  nullcline   $ u_1=1-\frac{1}{\gamma}(1+e^{-\beta(v_1-1)})$ in the region above the nullcline $ v_1=\frac{1}{(u_1+1)^p}$.  This implies that the orbits for  the intermediate values of $\omega=O(1)$ may be characterized  by the point of intersection of  $W^s(A)$ with $ u_1=1-\frac{1}{\gamma}(1+e^{-\beta(v_1-1)}),$ which moves down the nullcline monotonically.  It follows from \cite[Theorem 2.1]{Yang}  that for any $\omega$   in the system \eqref{e:10_0} there is an orbit that follows $W^s(A)$ and connects the equilibrium $A$ to the equilibrium $B$. The intersection of  one-dimensional stable manifold $W^s(A)$ with  the two-dimensional unstable manifold $W^u(B)$ in the two dimensional phase space is transversal by the dimension counting, therefore will persist  as a solution of the system \eqref{e:05s}, or,  equivalently, the system \eqref{e:05f}  with   sufficiently small  $\epsilon$.

\section{Reduction to the KPP equation \label{KPP}}

In this regime, we consider the PDE system \eqref{e:02stationary_2} under the assumption that $d_1 =O(1)$  and $d_2\ll 1$. To make this more definitive, we set $d_1=1$ and $d_2=\epsilon \ll 1$. 
In  a moving coordinate frame $\xi=x-ct$, system \eqref{e:02stationary_2}  reads as:
\begin{eqnarray}\label{e:021KPP}
\left\{\begin{array}{cll}
u_{\tau}&=& u_{\xi\xi}+cu_\xi+\frac{\gamma}{1+e^{-\beta(v-1)}}u(1-u)- u,\\
v_{\tau}&=&\epsilon v_{\xi\xi}+cv_\xi+\frac{1}{\omega}\left( 1- (1+u)^pv\right).
\end{array}\right. 
\end{eqnarray}

\begin{Theorem} \label{T:3} Assume  that $\epsilon \ll \omega$ in \eqref{e:021KPP}. Also assume that $\gamma >2$ is fixed, and parameters  $\beta$ and $p>0$ are such that
$$\frac{d^2}{d u^2}\left( \frac{ (1-u) u}{1+e^{-\beta(\frac{1}{(1+u)^p}-1)}}\right) <0, \,\, \text{ for }\,\, 0<u<\bar{u}(\gamma,\beta, p), $$
where $\bar{u}$ is the solution of the equation  \eqref{baru}.
For  every fixed value of $ c\geq  \sqrt{2(\gamma -2)}$, there exists $\epsilon_0>0$ such that for any $\epsilon <\epsilon_0$ in \eqref{e:021KPP} there is  $\omega_0=\omega_0(\epsilon)>0$  such that for every $0< \omega <\omega_0$  there exists a translationally invariant  family of fronts in \eqref{e:021KPP}  that have the  equilibria $A=(0,1)$ and $B=(\bar{u},\bar{v})$ as rest states.
As $\epsilon \to 0$ each front converges to a  front in 
\begin{eqnarray}\label{e:021KPPth}
\left\{\begin{array}{cll}
u_{\tau}&=& u_{\xi\xi}+cu_\xi+\frac{\gamma}{1+e^{-\beta(v-1)}}u(1-u)- u,\\
v_{\tau}&=&cv_\xi+\frac{1}{\omega}\left( 1- (1+u)^pv\right).
\end{array}\right. 
\end{eqnarray}
that moves with the same velocity.
\end{Theorem}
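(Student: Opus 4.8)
The plan is to peel off the two small parameters one at a time by Geometric Singular Perturbation Theory, recognizing the innermost reduced problem as a Fisher--KPP travelling wave equation. Writing a stationary solution of \eqref{e:021KPP} in the moving frame as $u=u_1$, $u_2=u_{1\xi}$, $v=v_1$, $v_2=v_{1\xi}$ gives the first-order system
\[
\left\{\begin{array}{l}
u_1'=u_2,\\
u_2'=-c\,u_2-\dfrac{\gamma}{1+e^{-\beta(v_1-1)}}u_1(1-u_1)+u_1,\\
v_1'=v_2,\\
\epsilon\,v_2'=-c\,v_2-\dfrac{1}{\omega}\bigl(1-(1+u_1)^pv_1\bigr),
\end{array}\right.
\]
with fast variable $v_2$ and singular parameter $\epsilon$. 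Its critical manifold $M_0=\{\,v_2=-\tfrac{1}{c\omega}\bigl(1-(1+u_1)^pv_1\bigr)\,\}$ is three-dimensional, the layer problem has linearization $-c<0$ in the $v_2$-direction, so $M_0$ is normally hyperbolic and attracting, and its reduced flow is exactly the travelling wave system of \eqref{e:021KPPth}. Fenichel's theorem \cite{Fenichel79, Jones94, kuehn} then gives, for $\epsilon$ small, an attracting invariant manifold $M_\epsilon$ that is $C^1$-$O(\epsilon)$-close to $M_0$ and carries an $O(\epsilon)$-perturbation of that reduced flow. The desired front will be obtained as a heteroclinic orbit in $M_\epsilon$ from $B=(\bar u,0,\bar v,0)$ to $A=(0,0,1,0)$, and the $C^1$-closeness of $M_\epsilon$ to $M_0$ yields the stated convergence, at the fixed speed $c$, to a front of \eqref{e:021KPPth} as $\epsilon\to 0$.

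Next I would study the reduced three-dimensional system on $M_0$ (the travelling wave ODE of \eqref{e:021KPPth}), which is \emph{itself} slow--fast with small parameter $\omega$: rescaling $\xi$ by $\omega$ shows $v_1$ is fast, with critical manifold $N_0=\{\,v_1=(1+u_1)^{-p}\,\}$, normally hyperbolic and \emph{repelling} since the normal eigenvalue is $(1+u_1)^p/c>0$. On $N_0$ the reduced flow is the scalar equation $u_{\xi\xi}+c\,u_\xi+F(u)=0$, where
\[
F(u)=\frac{\gamma(1-u)u}{1+e^{-\beta((1+u)^{-p}-1)}}-u=\gamma\,g(u)-u,\qquad g(u)=\frac{(1-u)u}{1+e^{-\beta((1+u)^{-p}-1)}}.
\]
I would then check $F$ is of Fisher--KPP type on $[0,\bar u]$: one has $F(0)=0$; substituting \eqref{baru} gives $1+e^{-\beta(\bar v-1)}=\gamma(1-\bar u)$, hence $F(\bar u)=0$; since $g'(0)=\tfrac12$ we get $F'(0)=\tfrac{\gamma}{2}-1=\tfrac{\gamma-2}{2}>0$; and the standing hypothesis $g''<0$ on $(0,\bar u)$ makes $F=\gamma g-u$ strictly concave there, so $F>0$ on $(0,\bar u)$, $F'(\bar u)<0$, and $F(u)\le F'(0)u$. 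Classical Fisher--KPP front theory \cite{Fisher, KPP, Ablowitz} then produces a monotone front from $\bar u$ at $-\infty$ to $0$ at $+\infty$ precisely for $c\ge 2\sqrt{F'(0)}=\sqrt{2(\gamma-2)}$, realized in the $(u_1,u_2)$-plane as the one-dimensional unstable manifold of the saddle $(\bar u,0)$ entering the stable (possibly degenerate) node $(0,0)$.

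It then remains to lift this planar connection back through the two reductions, tracking transversality. Since $N_0$ is normally hyperbolic it persists for small $\omega$ to a repelling invariant manifold $N_\omega$ carrying an $O(\omega)$-perturbation of the Fisher--KPP flow; $A$ and $B$ lie on $N_0$ (hence on $N_\omega$) and are $\omega$-independent, with $A$ a sink and $B$ a saddle \emph{within} $N_\omega$, so the perturbed front $\Gamma_\omega\subset N_\omega$ from $B$ to $A$ persists by robustness of a connection into an open basin. Because $N_\omega$ has no normal stable directions, $W^s(A)$ is an open subset of $N_\omega$, so $TW^s(A)=TN_\omega$ along $\Gamma_\omega$, while $W^u(B)$ is two-dimensional with $TW^u(B)=T\Gamma_\omega\oplus(\text{normal direction})$; hence $TW^u(B)+TW^s(A)=\R^3$ and $\Gamma_\omega$ is a transverse heteroclinic of the three-dimensional flow. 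Finally, since $M_0$ is attracting, in the four-dimensional system $W^u(B)$ still lies in $M_0$ while $W^s(A)$ gains only the one extra stable ($v_2$-)normal direction of $M_0$; combining with the previous computation shows $\Gamma_\omega$ is a transverse heteroclinic in $\R^4$, so it persists as a heteroclinic $\Gamma_{\omega,\epsilon}\subset M_\epsilon$ connecting $B$ to $A$ once $\epsilon$ is small relative to $\omega$, i.e.\ in the regime $\epsilon\ll\omega$. Projected to $(u,v)$, $\Gamma_{\omega,\epsilon}$ lies in $\{u>0,\,v>0\}$, is the asserted translationally invariant family of fronts of \eqref{e:021KPP}, and converges as $\epsilon\to 0$ to the front of \eqref{e:021KPPth} of the same speed.

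The main obstacle is precisely this bookkeeping: carrying a single planar KPP connection \emph{transversally} through one \emph{repelling} reduction ($N_\omega$) and one \emph{attracting} reduction ($M_\epsilon$), in particular justifying that the front must remain on the repelling manifold $N_\omega$ (equivalently, $W^s(A)\subset N_\omega$), pinning down the hierarchy $\epsilon\ll\omega\ll1$ that the nested slow--fast structure imposes on the quantifiers, and treating the borderline speed $c=\sqrt{2(\gamma-2)}$, where the node at $A$ degenerates yet the connection still persists.
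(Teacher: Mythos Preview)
Your proposal is correct and follows essentially the same route as the paper: two nested Fenichel reductions (first in $\epsilon$ to the attracting manifold $M_0$, then in $\omega$ to the repelling manifold $N_0=\{v_1=(1+u_1)^{-p}\}$), identification of the innermost reduced flow as a Fisher--KPP travelling wave equation with minimal speed $\sqrt{2(\gamma-2)}$, and lifting the planar heteroclinic back through both reductions by dimension-counting transversality, using that $N_\omega$ is repelling to force $W^s(A)\subset N_\omega$ and that $M_\epsilon$ is attracting to confine $W^u(B)$. Your transversality bookkeeping is in fact more explicit than the paper's, and the obstacles you flag (the repelling manifold containing the stable set of $A$, the quantifier hierarchy $\epsilon\ll\omega\ll1$, the degenerate node at the critical speed) are precisely the points the paper handles or glosses over.
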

\begin{proof}
The proof of this theorem is based on the geometric construction of a heteroclinic orbit in the associated dynamical system, which is corresponding to the front. 
The traveling wave ODE for the system  \eqref{e:021KPP} is: 
\begin{eqnarray}\label{e:02ode}
\left\{\begin{array}{cll}
0&=& u_{\xi\xi}+cu_\xi+\frac{\gamma}{1+e^{-\beta(v-1)}}u(1-u)- u,\\
0&=&\epsilon v_{\xi\xi}+cv_\xi+\frac{1}{\omega}\left( 1- (1+u)^pv\right).
\end{array}\right. 
\end{eqnarray}
We rewrite \eqref{e:02ode}  as a dynamical system:
\begin{eqnarray}\label{e:11KPP}
\left\{\begin{array}{cll}
\frac{du_1}{d\xi}&=&u_2,\\
\frac{du_2}{d\xi}&=&u_1 -c u_2 -\frac{\gamma}{1+e^{-\beta(v_1-1)}}u_1(1-u_1),\\
\frac{dv_1}{d\xi}&=& v_2,\\
\epsilon \frac{dv_2}{d\xi}&=&-cv_2+\frac{1}{\omega} ((1+u_1)^pv_1-1).
\end{array}\right. 
\end{eqnarray}
We also consider an equivalent system that captures the fast dynamics by setting $\zeta=\xi / \epsilon$,
\begin{eqnarray}\label{e:11KPPf}
\left\{\begin{array}{cll}
\frac{du_1}{d\zeta}&=&\epsilon u_2,\\
\frac{du_2}{d\zeta}&=&\epsilon\left(u_1 -c u_2 -\frac{\gamma}{1+e^{-\beta(v_1-1)}}u_1(1-u_1)\right),\\
\frac{dv_1}{d\zeta}&=& \epsilon v_2,\\
\frac{dv_2}{d\zeta}&=&-cv_2+\frac{1}{\omega} ((1+u_1)^pv_1-1).
\end{array}\right. 
\end{eqnarray} 

We study the singular limit of \eqref{e:11KPP}  when $\epsilon \to 0$, thus obtaining  an algebraic description of the slow manifold on which the solution of the limiting system  exists on the following three-dimensional  set:
\begin{equation}\label{m0} M_{\epsilon = 0, \omega}=\left\{(u_1,u_2,v_1,v_2)\vert v_2=\frac{1}{c\omega} ((1+u_1)^pv_1-1)\right\},\end{equation}
with the flow given by:
\begin{eqnarray}\label{e:111KPPlim}
\left\{\begin{array}{cll}
\frac{du_1}{d\xi}&=&u_2,\\
\frac{du_2}{d\xi}&=&u_1 -c u_2 -\frac{\gamma}{1+e^{-\beta(v_1-1)}}u_1(1-u_1),\\
\omega \frac{dv_1}{d\xi}&=&\frac{1}{c}((1+u_1)^pv_1-1),
\end{array}\right.
\end{eqnarray} 
or in a variable $z= \omega\xi$:
\begin{eqnarray}\label{e:111KPPlimz}
\left\{\begin{array}{cll}
\frac{du_1}{dz}&=&\omega u_2,\\
\frac{du_2}{dz}&=&\omega \left(u_1 -c u_2 -\frac{\gamma}{1+e^{-\beta(v_1-1)}}u_1(1-u_1)\right),\\
\frac{dv_1}{dz}&=& \frac{1}{c}((1+u_1)^pv_1-1).
\end{array}\right. 
\end{eqnarray} 
On the other hand,  $M_{\epsilon = 0, \omega}$ is a  set  equilibria for \eqref{e:11KPPf} with  $\epsilon=0:$
\begin{eqnarray}\label{e:11KPPf0}
\left\{\begin{array}{cll}
\frac{du_1}{d\zeta}&=&0,\\
\frac{du_2}{d\zeta}&=&0,\\
\frac{dv_1}{d\zeta}&=&0,\\
\frac{dv_2}{d\zeta}&=&-cv_2+\frac{1}{\omega} ((1+u_1)^pv_1-1).
\end{array}\right. 
\end{eqnarray} 
The linearization  of the system \eqref{e:11KPPf0} about any  point of $M_{\epsilon = 0, \omega}$ has three zero eigenvalues and  a negative eigenvalue $-c$, therefore 
$M_{\epsilon = 0, \omega}$ is normally hyperbolic. 
For sufficiently small $\epsilon$, by Fenichel's invariant manifold theory \cite{Fenichel79},  there exists an invariant, normally attracting manifold $M_{\epsilon, \omega}$  in the  system \eqref{e:11KPPf},
which is an $O(\epsilon)$-order perturbation of  $M_{\epsilon = 0, \omega},$ where:
\begin{equation}\label{me} M_{\epsilon, \omega}=\left\{(u_1,u_2,v_1,v_2)\vert v_2=\frac{1}{c\omega} ((1+u_1)^pv_1-1) +O(\epsilon)\right\}.\end{equation}
The flow generated by \eqref{e:11KPPf} on  $M_{\epsilon, \omega}$ is an $O(\epsilon)$-order perturbation of the flow on $M_0$:
\begin{eqnarray}\label{e:111KPPM0}
\left\{\begin{array}{cll}
\frac{du_1}{d\xi}&=&u_2,\\
\frac{du_2}{d\xi}&=&u_1 -c u_2 -\frac{\gamma}{1+e^{-\beta(v_1-1)}}u_1(1-u_1),\\
\omega \frac{dv_1}{d\xi}&=&\frac{1}{c}((1+u_1)^pv_1-1) +O(\epsilon).
\end{array}\right. 
\end{eqnarray} 

Our further analysis is based on considering another singular  limit in    \eqref{e:111KPPlim} as   $\omega \to 0$. Taking this limit, we obtain a description of a  two-dimensional slow manifold:
\begin{equation} \label{M00} M_{\epsilon=0, \omega=0}=\left\{(u_1, u_2, v_1):\, v_1=\frac{1}{(1+u_1)^p}\right\}\end{equation}
to which the solutions of the limiting system must belong to.
 With $\omega=0$ the system  \eqref{e:111KPPlimz}  reads as:
\begin{eqnarray}\label{e:52}
\left\{\begin{array}{cll}
\frac{du_1}{dz}&=&0,\\
\frac{du_2}{dz}&=&0,\\
\frac{dv_1}{dz}&=& \frac{1}{c}((1+u_1)^pv_1-1).
\end{array}\right. 
\end{eqnarray} 
The  linearization  of \eqref{e:52} about any point $(\tilde u_1, \tilde v_1)$ of the set  $M_{\epsilon=0, \omega=0}$  has two zero eigenvalues and a positive eigenvalue $\frac{1}{c}(1+\tilde u_1)^p$, therefore $M_{\epsilon=0, \omega=0}$ is repelling. 
The dynamics on $M_{\epsilon=0, \omega=0}$ is given by: 
\begin{eqnarray}\label{e:11a}
\left\{\begin{array}{cll}
\frac{du_1}{d\xi}&=&u_2,\\
\frac{du_2}{d\xi}&=&u_1 -c u_2 -\frac{\gamma}{1+e^{-\beta(v_1-1)}}u_1(1-u_1),
\end{array}\right. 
\end{eqnarray} 
or, equivalently by, 
\begin{equation}
0=\frac{d^2u_1}{d\xi^2} +c\frac{du_1}{d\xi}-u_1 +\frac{\gamma}{1+e^{-\beta(\frac{1}{(1+u_1)^p}-1)}}u_1(1-u_1).
\end{equation}
Recall that in the original variables $u_1=u$, so the latter equation  is a traveling wave equation for the scalar partial differential equation:
\begin{equation}\label{e:02}
u_{t}= u_{xx} - u +\frac{\gamma}{1+e^{-\beta(\frac{1}{(1+u)^p}-1)}}u(1-u).
\end{equation} 
The equation \eqref{e:02} is a PDE of a Fisher-KPP type  \cite{Fisher, KPP}, at least, for some parameter regimes. To streamline the current proof, we describe these regimes later in this section.  

The existence of  fronts is well known for the Fisher-KPP equation. In particular, it is proved by a trapping region argument  that for $c\geq 2\sqrt{f^{\prime}(0)} = \sqrt{2(\gamma -2)}$ there is a  heteroclinic orbit that converges to its asymptotic limits  in a monotone way and that is a representation of a monotone front. These heteroclinic  orbits are formed by the intersection of the one-dimensional unstable manifold of the equilibrium at $(\bar u, 0)$ and the two-dimensional stable manifold of the equilibrium $(0,0)$ in the two-dimensional phase space.  By dimension counting this intersection is transversal.

Since the set  $M_{\epsilon=0, \omega=0}$ described in  \eqref{M00} is normally hyperbolic, by Fenichel's theory there is an invariant manifold  of \eqref{e:111KPPlimz} which is an $O(\omega)$-order perturbation $M_{\epsilon=0, \omega}$ of $M_{\epsilon=0, \omega=0}$  which is also normally repelling. The flow on that two-dimensional manifold $M_{\epsilon=0, \omega}$ is an $O(\omega)$-order perturbation of the flow given by \eqref{e:11}.

 In the perturbed system \eqref{e:111KPPlim}, or equivalently \eqref{e:111KPPlimz}, with a sufficiently small $\omega >0$, the  equilibrium $(0,0,1)$ is  a saddle with two-dimensional stable manifold and one-dimensional unstable manifold. To show that, we  linearize  \eqref{e:111KPPlimz} about   the equilibrium $(0,0,1)$:
 \begin{eqnarray}\label{e:111KPPlimzlin}
 \left\{\begin{array}{cll}
\frac{du}{dz}&=&\omega u_1,\\
\frac{du_1}{dz}&=&- \omega\left(\frac{\gamma}{2}-1\right)u -\omega c u_1,\\
\frac{dv}{dz}&=& \frac{p}{c} u +\frac{1}{c}v,
\end{array}\right. 
\end{eqnarray} 
and  calculate the eigenvalues of  the linear operator defined by the right-hand-side  of this system. For $\gamma>2$, it  has two negative eigenvalues 
$\left(-\omega c\pm\sqrt{\omega^2c^2-2\omega^2(\gamma-2)}\right)/2$  and a positive eigenvalue $\frac{1}{c}$.
On the other hand, the eigenvalues of the  linearization of \eqref{e:111KPPlimz} about   the equilibrium $(\bar u, 0, \bar v)$    can be deduced from the slow-fast structure of the system \eqref{e:111KPPlimz}.
Since the slow manifold is normally repelling and this equilibrium on the slow manifold is a saddle, then, for small $\omega$, this equilibrium will have two positive eigenvalues and one negative eigenvalue. 
So the equilibrium $(\bar u, 0, \bar v)$  has a one-dimensional stable manifold and a two-dimensional unstable manifold. 
 
Any solution of \eqref{e:111KPPlimz}  approaching $(1, 0, 0)$ does so while staying on the set $M_{\epsilon=0, \omega}$ since this set is repelling. The solution  that belongs to $M_{\epsilon=0, \omega}$  and  leaves $(\bar u, 0, \bar v)$ must follow the direction within the two-dimensional unstable manifold $W^{u}(\bar u, 0, \bar v)$ that is aligned with \eqref{M00}. Indeed, one of the unstable  eigen-directions of $(\bar u, 0, \bar v)$ is transversal to $M_{\epsilon=0, \omega}$, so the intersection of  $W^{u}(\bar u, 0, \bar v)$ with the set \eqref{M00} is one-dimensional. We further consider the  intersection of this one-dimensional set  with  the two-dimensional stable manifold $W^s(1; 0; 0)$ and notice that it is by dimension counting transversal. 
Thus, for a sufficiently small $\omega >0$, this intersection persists as a transversal intersection and thus,  a heteroclinic orbit for  \eqref{e:111KPPlim}, or equivalently \eqref{e:111KPPlimz}, is formed.

We now recall that the set $M_{\epsilon=0, \omega}$ given by \eqref{m0} is normally hyperbolic  and attracting. The normal hyperbolicity of $M_{\epsilon=0, \omega}$  implies that in the full system 
\eqref{e:11KPP}, there exists an invariant manifold $M_{\epsilon, \omega}$  which is an $O(\epsilon)$-order perturbation of $M_{\epsilon=0, \omega}$  and as such  converges to  $M_{\epsilon=0, \omega}$ in the limit $\epsilon \to 0$ . For sufficiently small $\epsilon,$ it is also normally attracting and the flow generated by \eqref{e:11KPP}  on  $M_{\epsilon, \omega}$  is an $O(\epsilon)$-order perturbation of the limiting flow generated by  the system \eqref{e:111KPPlimz}.  

We claim that there exists a heteroclinic orbit of \eqref{e:11KPP}  that asymptotically connects equilibria $(0, 0,1,0) $ and $(\bar u, 0,\bar v, 0)$ and 
which is an   $O(\epsilon)$-order perturbation  of the heteroclinic  orbit that exists on  $M_{\epsilon=0, \omega}$.  According to  \cite{Fenichel79}, any invariant set for the system  \eqref{e:11KPP}   that is sufficiently
close to $M_{\epsilon=0, \omega}$   is located on $M_{\epsilon, \omega}$. Therefore, both equilibria $(0, 0,1,0) $ and $(\bar u, 0,\bar v, 0)$ belong to $M_{\epsilon, \omega}$. Because $M_{\epsilon=0, \omega}$  is normally attracting, the two-dimensional unstable manifold of $(\bar u, 0,\bar v, 0)$ must stay on  $M_{\epsilon, \omega}$, and so does any orbit that follows that unstable manifold. On the other hand, the intersection of the three-dimensional stable manifold of $(0, 0,1,0) $  with $M_{\epsilon, \omega}$ is two-dimensional.  When $\epsilon=0$, these two  two-dimensional sets intersect transversally  within the three dimensional set, and therefore, the intersection persists when a perturbation with  a sufficiently small $\epsilon$ is introduced.
\end{proof}

We complete the proof of Theorem~\ref{T:3}  by showing that parameter regimes exist  such that the equation \eqref{e:02} is 
 a PDE of a Fisher-KPP type  \cite{Fisher, KPP} for some parameter regimes.  
The Fisher-KPP type equations are PDEs of the form
$$
u_{t}= u_{xx} + f(u), 
$$
where $f$ satisfies the following conditions: there are two equilibrium points  for the equation, say $0$ and $a$, so 
$f(0)=0$, $f(a)=0$, and 
$f^{\prime} (0)>0$, $f^{\prime} (a)<0$, $f^{\prime\prime} (u)<0 $, for $0<u<a$.
 In  the equation \eqref{e:02} we have 
$$f(u)= -u\left(1- \frac{\gamma}{1+e^{-\beta(\frac{1}{(1+u)^p}-1)}}(1-u)\right),$$
so $f(0)=0$, $f(\bar u)=0$,  and 
$$f^{\prime} (0)= -\left(1-\frac{\gamma}{2}\right)>0, \text{ when } \gamma>2.$$ 
 For any $\gamma>0$, $\beta >0$,  since $\frac{\gamma}{1+e^{-\beta(\frac{1}{(1+\bar u)^p}-1)}} = \frac{1}{1-\bar u}$ and  $\bar u <1$,
\begin{equation}
f^\prime(\bar u)
=-\bar u\left(\frac{1}{1-\bar u} +\frac{p\beta}{(1+\bar u)^{p+1}}\right) <0.
  \end{equation}

 Below we show that there are values of $\beta$ and $p$ such that $f^{\prime\prime} (u)<0 $ for $0<u <\bar u$. To show that,    we introduce, for $\beta  >0$,  a function: $$h(u)= \frac{1}{1+e^{-\beta\left(\frac{1}{(1+u)^p}-1\right)}}.$$ 
The function $h$ is  decreasing since:
$$h^{\prime}(u)= \frac{- 1 }{\left(1+e^{-\beta\left (\frac{1}{(1+u)^p}-1\right)}\right)^2} \frac{\beta p }{(1+u)^{p+1}}e^{-\beta(\frac{1}{(1+u)^p}-1)} <0  $$
 and convex since:
\begin{eqnarray*}h^{\prime\prime}(u)&=& 
 \frac{ e^{-\beta\left (\frac{1}{(1+u)^p}-1\right)}  }{\left(1+e^{-\beta\left (\frac{1}{(1+u)^p}-1\right)}\right)^2}   \frac{\beta p }{(1+u)^{p+2}} \left(\frac{ \beta p }{(1+u)^{p}}\left( \frac{2 e^{-\beta\left (\frac{1}{(1+u)^p}-1\right)} }{1+e^{-\beta\left (\frac{1}{(1+u)^p}-1\right)}}  - 1\right)
+ (p+1)
\right) \\
&& \geq \frac{ e^{-\beta\left (\frac{1}{(1+u)^p}-1\right)}  }{\left(1+e^{-\beta\left (\frac{1}{(1+u)^p}-1\right)}\right)^2}   \frac{\beta p  (p+1)}{(1+u)^{p+2}} 
>0.
  \end{eqnarray*}
Here, we took into account the fact that: \vspace{-6pt}
$$ \frac{1}{2}\leq \frac{e^{-\beta\left (\frac{1}{(1+u)^p}-1\right)} }{1+e^{-\beta\left (\frac{1}{(1+u)^p}-1\right)}}   <1,  \text{ for } u\geq 0.$$ 
We next investigate the convexity of the function $f$.  
We want to find parameter regimes when
$$f^{\prime\prime} (u) = -2 \gamma h(u) +2\gamma(1-2u) h^{\prime}(u) +\gamma u(1-u) h^{\prime\prime}(u) <0. $$
A straightforward calculation of the derivative  and estimates  on some terms show that 
\begin{eqnarray*}
&& \frac{1}{\gamma}\frac{\left(1+e^{-\beta(\frac{1}{(1+u)^p}-1)}\right)^2}{e^{-\beta(\frac{1}{(1+u)^p}-1)}} f^{\prime\prime} (u) = -2  \frac{\left(1+e^{-\beta(\frac{1}{(1+u)^p}-1)}\right)}{e^{-\beta(\frac{1}{(1+u)^p}-1)}} 
\\&&
+ \frac{\beta p }{(1+u)^{p+2}}  \left( - 2 (1-2u)(1+u) +
 u(1-u)\frac{ \beta p }{(1+u)^{p}} \frac{e^{-\beta\left (\frac{1}{(1+u)^p}-1\right)} -1}{1+e^{-\beta\left (\frac{1}{(1+u)^p}-1\right)}}  +  u(1-u)(p+1)
\right) \\
&& \leq  -2  \frac{\left(1+e^{-\beta(\frac{1}{(1+u)^p}-1)}\right)}{e^{-\beta(\frac{1}{(1+u)^p}-1)}} + \frac{\beta p }{(1+u)^{p+2}}  \left( - 2 (1-2u)(1+u) +
 u(1-u)(\beta p +p+1)
\right) \\
&& \leq 
\beta p (3- (\beta +1)p)u^2  +(3 + (\beta +1)p) u -2(1+\beta p).
  \end{eqnarray*}
We next show that  $p$ and $\beta$  exist such that the  upper bound   obtained above, which  is a quadratic expression  in $u$,  is negative for $u \in (0, \bar {u}) \subset (0,1)$.

First, we observe that if 
$ 3 - (\beta +1)p >0\label{p}$, then 
\begin{equation}\beta p (3- (\beta +1)p)u^2  +(3 + (\beta +1)p) u -2(1+\beta p) <0,\label{neg}\end{equation}
when   $\rho_- < u <\rho_+$,  where
\begin{equation}
\rho_{\pm} = \frac{- (3+(\beta +1)p) \pm \sqrt {(3+(\beta +1)p)^2 +8\beta p (3- (\beta +1)p) (1+\beta p)}}{2 \beta p (3- (\beta +1)p) }. \label{roots}
\end{equation}
Therefore, we want to guarantee that  $(0, \bar {u}) \subset (0,1) \subset (\rho_- , \rho_+)$. A sufficient condition for this inclusion is: 
\begin{equation}\frac{- (3+(\beta +1)p) + \sqrt {(3+(\beta +1)p)^2 +8\beta p (3- (\beta +1)p )(1+\beta p)}}{2 \beta p (3- (\beta +1)p)}\geq 1,\label{quad}\end{equation}
which leads to the expression:
$$- \beta (\beta +1)p^2 +(2\beta+1) p  +1  \leq 0.$$
We consider this condition as  quadratic  in $p$. Its  roots are given by:
$$ \frac{(2\beta+1) \pm\sqrt{(2\beta+1)^2+4\beta (\beta +1)}}{2 \beta (\beta +1)},$$
 so the inequality \eqref{neg} occurs when \begin{equation}
 p \geq \frac{(2\beta+1)  + \sqrt{(2\beta+1)^2+4\beta (\beta +1)}}{2 \beta (\beta +1)}.
 \end{equation}
 From \eqref{p}, we get  then the following sufficient condition: 
 \begin{equation}\label{regionlower}
\frac{(2\beta+1)  + \sqrt{(2\beta+1)^2+4\beta (\beta +1)}}{2 \beta (\beta +1)} \leq p< \frac{3}{\beta+1}.
 \end{equation}
 The interval  above is not empty  if  $\beta \geq 2$.
 

When $3 - (\beta +1)p <0\label{p_}$,   the  inequality  \eqref{neg} 
holds for $0<u<\bar u$ when either the quadratic expression  in \eqref{neg} has no roots or when the smallest root is larger than $1$, since the coefficient of $u^2$ is negative and  if the roots \eqref{roots} are real then they are nonnegative.  
The first case occurs when the following holds:
\begin{equation}{(3+(\beta +1)p)^2 +8\beta p (3- (\beta +1)p) (1+\beta p)} <0
\end{equation}
and the second holds when:
\begin{equation} \frac{- (3+(\beta +1)p) - \sqrt {(3+(\beta +1)p)^2 +8\beta p (3- (\beta +1)p) (1+\beta p)}}{2 \beta p (3- (\beta +1)p)} >1.
\end{equation}
These two regions are complementary to each other in the intersection of  
\begin{equation}\label{regionbp-}
p\geq \frac{(2\beta+1)  + \sqrt{(2\beta+1)^2+4\beta (\beta +1)}}{2 \beta (\beta +1)} \quad \text{ and }\quad  p> \frac{3}{\beta +1}.
\end{equation}
Combining this region with the region described in \eqref{regionlower}, we conclude  that the region where   \eqref{neg} 
holds for $0<u\leq 1$ and therefore for  $0<u<\bar u$ is:
\begin{equation}\label{regionpb}
p\geq \frac{(2\beta+1)  + \sqrt{(2\beta+1)^2+4\beta (\beta +1)}}{2 \beta (\beta +1)}.
\end{equation}
In conclusion, we have proved the following  statement.
 \begin{Proposition} For any $\gamma >2$, if  $\beta>0$ and $p>0$  satisfy
\eqref{regionpb}, then
    the equation \eqref{e:02} is  a Fisher-KPP type equation.
 \end{Proposition}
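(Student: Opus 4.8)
The plan is to show that the reaction term of \eqref{e:02},
\[ f(u)=-u+\frac{\gamma}{1+E}\,u(1-u)=-u\bigl(1-\gamma(1-u)h(u)\bigr),\qquad h(u)=\frac{1}{1+E},\quad E:=e^{-\beta\left(\frac{1}{(1+u)^{p}}-1\right)}, \]
satisfies the five defining features of a Fisher-KPP nonlinearity relative to the equilibria $0$ and $\bar u$: $f(0)=0$, $f(\bar u)=0$, $f'(0)>0$, $f'(\bar u)<0$, and $f''<0$ on $(0,\bar u)$. The first is trivial; the second is exactly the defining relation \eqref{baru} for $\bar u$, which moreover yields $0<\bar u<\frac{\gamma-1}{\gamma}<1$. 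Since $h(0)=\frac{1}{2}$ one has $f'(0)=\frac{\gamma}{2}-1>0$ precisely because $\gamma>2$, and the short computation already recorded above --- using $\gamma(1-\bar u)h(\bar u)=1$ and $\bar u<1$ --- gives $f'(\bar u)<0$. So all the content is in the concavity $f''<0$ on $(0,\bar u)$.

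For that, I would first establish the two sign facts on $h$ used in the estimate: $h$ is decreasing, since $E'=E\,\frac{\beta p}{(1+u)^{p+1}}>0$ forces $h'=-\frac{E}{(1+E)^{2}}\frac{\beta p}{(1+u)^{p+1}}<0$; and $h$ is convex, because $p>0$ gives $E\ge 1$ for $u\ge 0$, hence $\frac{E}{1+E}\ge\frac{1}{2}$, which is exactly the inequality needed to read off $h''>0$ from the explicit expression for $h''$. Then, as carried out above, writing $f''=-2\gamma h+2\gamma(1-2u)h'+\gamma u(1-u)h''$, multiplying by the positive factor $\frac{1}{\gamma}\frac{(1+E)^{2}}{E}$ and bounding each remaining rational factor by its extreme value on $(0,1)$ (together with $\frac{1+E}{E}\ge 1$ and $\frac{E-1}{1+E}\le 1$), produces the quadratic upper bound
\[ q(u):=\beta p\bigl(3-(\beta+1)p\bigr)u^{2}+\bigl(3+(\beta+1)p\bigr)u-2(1+\beta p) \]
appearing in \eqref{neg}. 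Hence it suffices to prove $q(u)<0$ for $0<u<1$, which contains $(0,\bar u)$.

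The last step is a case split on the sign of the leading coefficient $3-(\beta+1)p$. If $3-(\beta+1)p>0$, then $q$ is an upward parabola with $q(0)=-2(1+\beta p)<0$, so $q<0$ on $(0,1)$ iff $q(1)\le 0$, equivalently iff the larger root $\rho_+$ of \eqref{roots} satisfies $\rho_+\ge 1$; clearing the (positive) denominator and squaring reduces this to $-\beta(\beta+1)p^{2}+(2\beta+1)p+1\le 0$, whose positive root is the lower bound in \eqref{regionpb}, yielding the subregion \eqref{regionlower} (nonempty for $\beta\ge 2$). If $3-(\beta+1)p<0$, then $q$ is a downward parabola with $q(0)<0$, so it is negative on all of $(0,1)$ either when it has no real root or when its roots are real --- necessarily both nonnegative --- with the smaller one exceeding $1$; these two alternatives exhaust the region \eqref{regionbp-}. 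Since the union of \eqref{regionlower} and \eqref{regionbp-} is exactly \eqref{regionpb}, under \eqref{regionpb} we get $q<0$ on $(0,1)$, hence $f''<0$ on $(0,\bar u)$, and \eqref{e:02} is of Fisher-KPP type.

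I expect the real obstacle to lie in the middle paragraph: reducing $f''$ to the clean quadratic $q$ means discarding several exponential terms, and one has to be careful that every estimate \emph{enlarges} $f''$ (so that $q<0$ does imply $f''<0$) and that no discarded term can reverse the comparison --- in particular the mixed-sign bracket $-2(1-2u)(1+u)+u(1-u)(\beta p+p+1)$ must be handled so that the bound stays valid across all of $(0,1)$. The rest is bookkeeping: confirming in the $3-(\beta+1)p<0$ branch that "no real roots" and "smaller root $>1$" genuinely partition \eqref{regionbp-}, and that \eqref{regionlower}$\,\cup\,$\eqref{regionbp-} collapses to the single inequality \eqref{regionpb}.
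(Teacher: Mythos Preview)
Your proposal is correct and follows the paper's own argument almost step for step: the endpoint checks on $f$, the monotonicity and convexity of $h$ via $E\ge 1$, the quadratic upper bound $q(u)$ on a positive multiple of $f''$, and the same case split on the sign of $3-(\beta+1)p$ leading to \eqref{regionpb}. Even the concern you flag about the mixed-sign bracket is precisely the one delicate step in the paper's estimate.
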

 \begin{figure}[t]
\begin{center}
\includegraphics[width=3.0in]{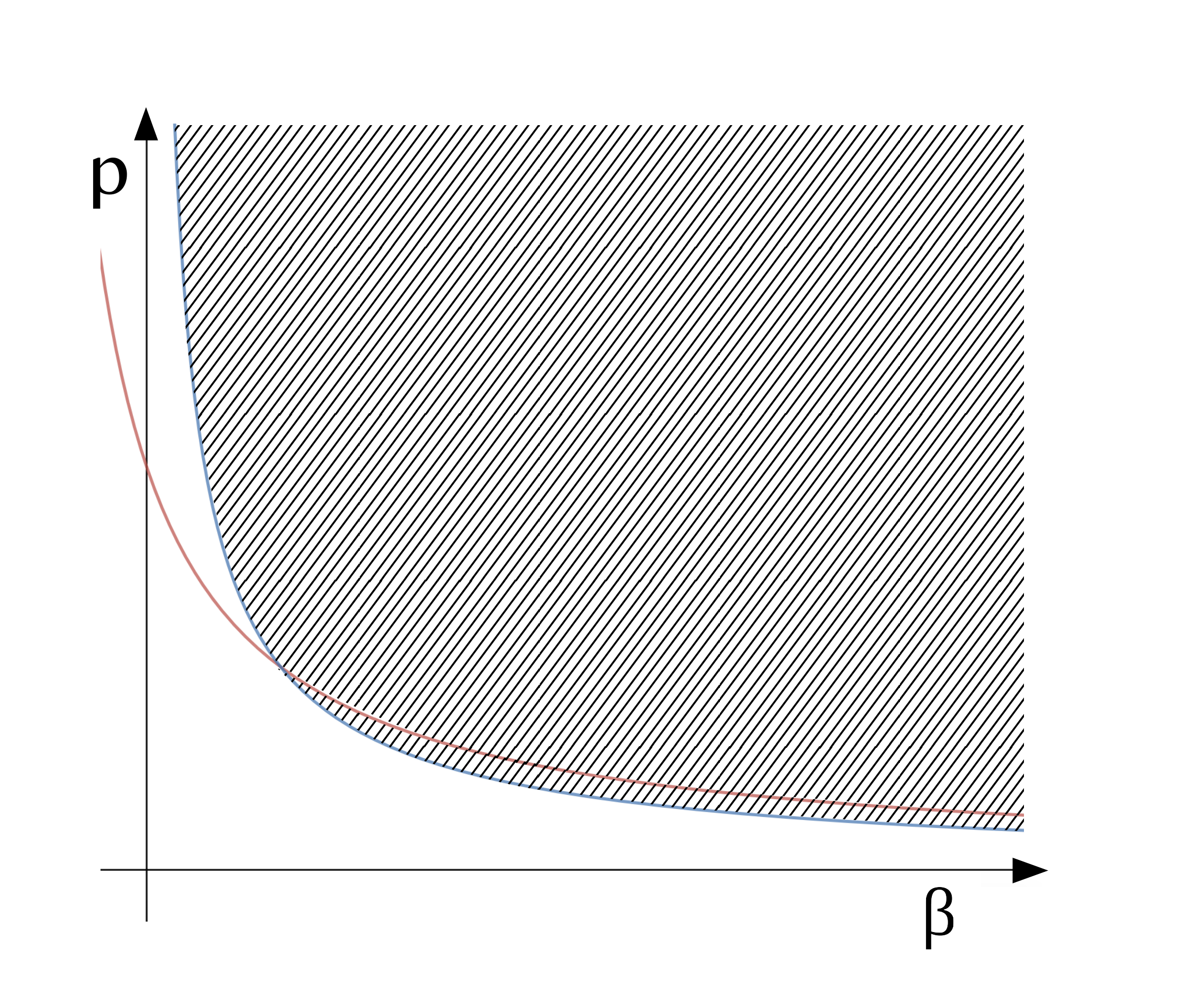} 
\caption{The region in $\beta p$-plane  described by \eqref{regionpb} where the KPP dynamics is guaranteed to be prevalent  in the system  \eqref{e:02ode}, relative to the curve $p=3/(\beta+1)$.}
\end{center}
\end{figure}

On the other hand, we note that since  $f^{\prime\prime}(0)= -\gamma(\beta p+2)/2<0$, then $f^{\prime\prime}(u)<0$ for sufficiently small positive values of $u$.  Moreover, the  solution $\bar u$ of the  equation  \eqref{baru} is a locally increasing  function $\bar u(\cdot)$  of $\gamma$, while  $\bar u(2)=0$. It is easy to see that if  $\bar u$ is sufficiently small,  then $f^{\prime}(u)<0$ for $0<u<\bar u$. Therefore, the following statement holds.
\begin{Proposition} \label{P2}For any fixed $\beta>0$ and $p>0$,  there exists $\gamma_0=\gamma_0(\beta, p)>2$ such that for any $2<\gamma\leq \gamma_0$
 the equation \eqref{e:02} is  a Fisher-KPP type equation.
 \end{Proposition}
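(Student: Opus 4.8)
The plan is to verify, for $\gamma$ sufficiently close to $2$ from above, the four defining properties of a Fisher-KPP type equation for
$f(u)= -u\left(1- \frac{\gamma}{1+e^{-\beta(\frac{1}{(1+u)^p}-1)}}(1-u)\right)$
with the two equilibria $0$ and $a=\bar u=\bar u(\gamma)$. Three of these four properties have in fact already been recorded above for \emph{all} $\gamma>2$ and all $\beta,p>0$: the equalities $f(0)=0$ and $f(\bar u)=0$ hold by the definition \eqref{baru} of $\bar u$; the sign $f'(0)=\gamma/2-1>0$; and $f'(\bar u)<0$, which follows from the displayed formula for $f'(\bar u)$ together with $0<\bar u<1-\tfrac1\gamma<1$ (the upper bound because the right-hand side of \eqref{baru} is positive, forcing $\gamma-1-\gamma\bar u>0$). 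Hence the only thing that still requires $\gamma$ to be restricted is the concavity condition $f''(u)<0$ for $0<u<\bar u$.

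For the concavity I would regard $f''$ as a function of both arguments, $F(u,\gamma)=f''(u)$, which is smooth (analytic) on the open set $\{u>-1\}\times\{\gamma>0\}$. Evaluating at $u=0$ gives $F(0,\gamma)=-\gamma(\beta p+2)/2$, so $F(0,2)=-(\beta p+2)<0$. By joint continuity of $F$ at $(0,2)$ there are $\delta_0>0$ and $\delta_1>0$ with $F(u,\gamma)<0$ whenever $|u|<\delta_0$ and $|\gamma-2|<\delta_1$. Invoking joint (rather than $u$-by-$u$) continuity here is what keeps the window $\delta_0$ from having to shrink as $\gamma$ is varied.

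Next I would use that $\bar u(\gamma)\to0^+$ as $\gamma\to2^+$: substituting $u=0$ into \eqref{baru} yields $\gamma=2$, so $\bar u(2)=0$, and $\gamma\mapsto\bar u(\gamma)$ is continuous and locally increasing, as noted above. Therefore one can pick $\gamma_0\in(2,\,2+\delta_1)$ so small that $0<\bar u(\gamma)<\delta_0$ for every $\gamma\in(2,\gamma_0]$. For such $\gamma$, the interval $(0,\bar u(\gamma))$ lies inside $(0,\delta_0)$ and $\gamma$ lies in $(2-\delta_1,2+\delta_1)$, so $f''(u)=F(u,\gamma)<0$ for all $u\in(0,\bar u)$. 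Combined with the three properties already in hand, this shows that \eqref{e:02} is a Fisher-KPP type equation for every $2<\gamma\le\gamma_0$, which is Proposition \ref{P2}.

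There is no deep obstacle here: the argument is an elementary perturbation off the degenerate value $\gamma=2$, where the excited state collides with the relaxed state ($\bar u(2)=0$) while $f''(0,2)$ is already strictly negative. The only point needing a little care is, as indicated, that the concavity window must be chosen using joint continuity of $f''$ and then $\gamma_0$ shrunk so that $\bar u(\gamma)$ stays inside that fixed window; one could make this fully quantitative by bounding $|F(u,\gamma)-F(0,2)|$ on a small box, but that is routine and unnecessary for the existence statement.
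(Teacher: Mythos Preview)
Your proof is correct and follows essentially the same route as the paper: use $f''(0)=-\gamma(\beta p+2)/2<0$ together with continuity to get $f''<0$ on a small interval, then shrink $\gamma$ toward $2$ so that $\bar u(\gamma)\to 0^+$ forces $(0,\bar u)$ into that interval. Your version is in fact slightly tidier than the paper's sketch, since you explicitly invoke \emph{joint} continuity of $F(u,\gamma)=f''(u)$ at $(0,2)$ to secure a $\gamma$-independent window $\delta_0$ before choosing $\gamma_0$; the paper leaves this uniformity implicit.
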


 \begin{Remark}
 We point out that the conditions on $p$ and $\beta$ described above are sufficient but not necessary. For any particular $p$ and $\beta$ outside of these intervals, one would have to check if the second derivative of the function 
 $$f(u)= -u\left(1- \frac{\gamma}{1+e^{-\beta(\frac{1}{(1+u)^p}-1)}}(1-u)\right)$$
is negative for $0<u<\bar{u}$. \end{Remark}

 \begin{Remark} The reduction  of the \eqref{e:021KPP} to  the equation \eqref{e:02} holds for $p<0$ as well.  Proposition~\ref{P2} indicates  that as long as $\beta p+1 >0$ and $\gamma$ is close to $2$, the equation  \eqref{e:02} is a Fisher-KPP type equation since the conditions on $f^{\prime}$ and $f^{\prime\prime}$ are satisfied.  Therefore, the Fisher-KPP dynamics are important in the case of $p<0$ as well.
  \end{Remark}

\section{Numerical results \label{NR}}

In this section, we describe some numerical results for the computation of the traveling wave fronts. To compute the front, we 
used the Crank-Nicolson method, which is an implicit finite difference method that is second-order accurate in both time and space. We discretize system (10) on a finite domain $[0 , L]$, with zero Neumann boundary conditions. A decreasing exponential function is used as an  initial condition for $u$ and a constant function is used as an initial condition for $v$.  More precisely, we consider
$$ \begin{cases} 
& u_{\tau} = d_1 u_{xx} +cu_x +f(u,v), \\
& v_{\tau} = d_2 v_{xx}+cv_x +g(u,v), \\
& u_x(0,\tau) = 0 , \;\;\; u_x(L,\tau) = 0 , \\
& v_x(0,\tau) = 0 , \;\;\; v_x(L,\tau) = 0 , \\
&u(x,0) = A e^{-k x}, \;\;\; A>0, k>0 , \\ 
& v(x,0)=B, \;\;\; B>0, 
\end{cases}$$
where $$f(u,v) = \frac{\omega \gamma u(1-u)}{1+ e^{-\beta (v- \alpha)}} -  
\omega u   
\;\;\;\text{and} \;\;\;
g(u,v) = 1 - (1+u)^p v.$$
The discretized scheme of the problem has the following form: 
\begin{equation}  \begin{cases} 
& M_u \mathbf{u}^{l+1} = N_u \mathbf{u}^{l} + 4 
f(\mathbf{u}^{l},\mathbf{v}^{l}) \lambda (\Delta x)^2 , \\
& M_v \mathbf{v}^{l+1} = N_v \mathbf{v}^{l} + 4 
g(\mathbf{u}^{l},\mathbf{v}^{l})  \lambda (\Delta x)^2 , \\ 
\end{cases}\end{equation}
where $l$ represents the number of time steps, $\Delta \tau$ represents the  size  of each time steps, $\mathbf{u}^{l}$ and $\mathbf{v}^{l}$	represent vectors 
of $u$ and $v$ at each point of the domain at time step $l$, 
$$ M_u = 
\begin{pmatrix}
4(1+ \lambda_u) & -\lambda_u(2+c \Delta x) & 0 & ... & 0 \\
-\lambda_u(2-c \Delta x) & 4(1+ \lambda_u) & -\lambda_u(2+c \Delta x) & ... & 0 \\
& & \ddots &  &  &  \\
0 & ... & -\lambda_u(2-c \Delta x) & 4(1+ \lambda_u) & -\lambda_u(2+c \Delta x) \\
0 & ... & 0  & -\lambda_u(2-c \Delta x) & 4(1+ \lambda_u) \\
\end{pmatrix}$$
where $ \lambda_u = d_1  \frac{\Delta \tau}{(\Delta x)^2}$. The matrix $ N_u $ has similar definition.
The matrices $M_v $ and $ N_v$ are  built similarly, but with  $\lambda_v = d_2  
\frac{\Delta \tau}{(\Delta x)^2}$.
We note that the Neumann boundary conditions are incorporated in the matrices.  To 
find the solution, we solve the discretized system for $\mathbf{u}^{l+1}$ 
and  $\mathbf{v}^{l+1}$ at each time step. 

Depending on the values of parameters $\gamma$, $\beta$, $p$, and $\omega$, we observed  both monotone and non-monotone fronts .  
The Figures below depict typical shapes of the fronts solution. In these calculations,  $\alpha=1$ and the diffusion constants  are   $d_1 = 0.001$ and $d_2 = 0.002$.

We illustrate small perturbations of the case $\omega=0$  in Figure~\ref{w001} 
where we set $\omega=0.01$. Figure ~\ref{w20} corresponds to a relatively large value  $w=100$. In both cases we set $c=2$.
\begin{figure}[h!]
	\begin{center}
		\includegraphics[scale=0.38]{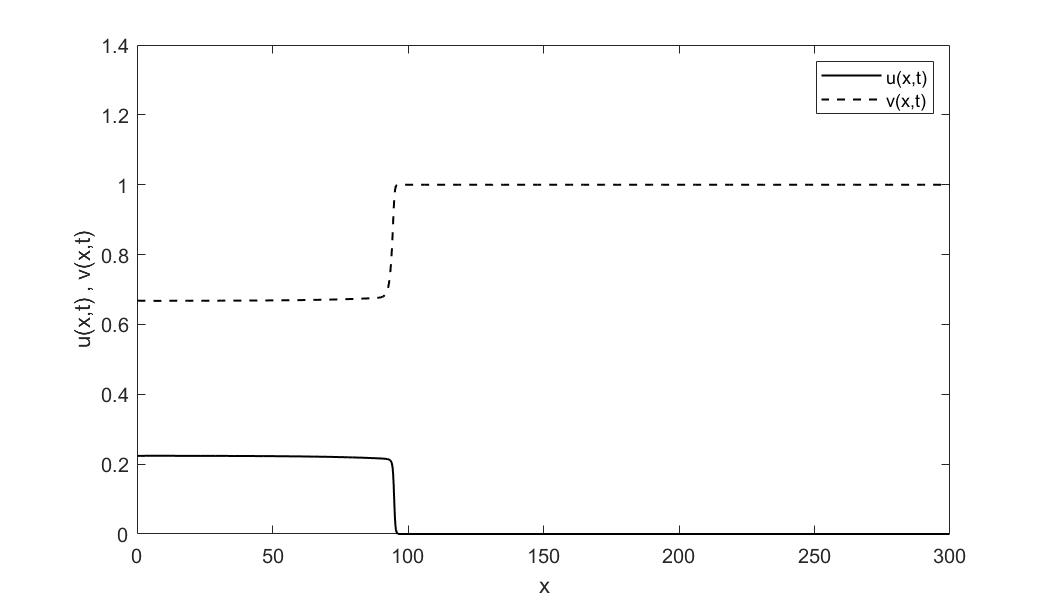}
		\caption{Traveling wave profiles 
			for parameters $\gamma= 1000, \beta = 20, p =2, \omega =0.1, 
			\alpha = 1, c=2.$}
		\label{w001}
	\end{center}
\end{figure}

\begin{figure}[h!]
	\begin{center}
		\includegraphics[scale=0.38]{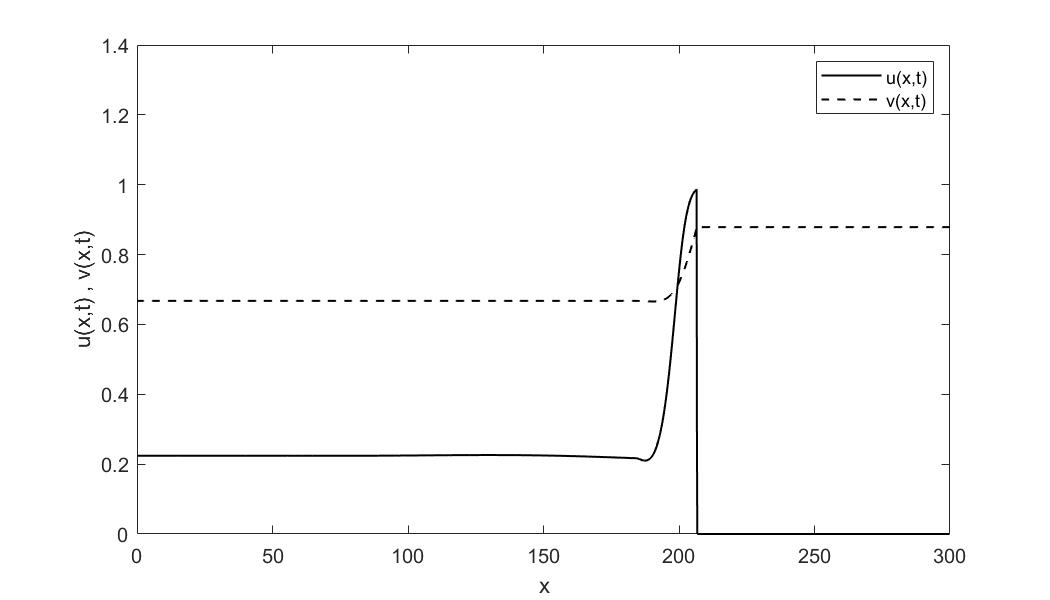}
		\caption{Traveling wave profiles 
			for parameters $\gamma= 5, \beta = 20, p =2, \omega =100, 
			\alpha = 1, c=2.$}
		\label{w20}
	\end{center}
\end{figure}
\FloatBarrier

To illustrate the fronts described in Section~\ref{KPP}, we take $d_u=1$, $d_v=  0.0001$. The simulations produce a  monotone profile  illustrated on Figure \ref{num_front4}.
\begin{figure}[h!]
	\begin{center}
		\includegraphics[scale=0.38]{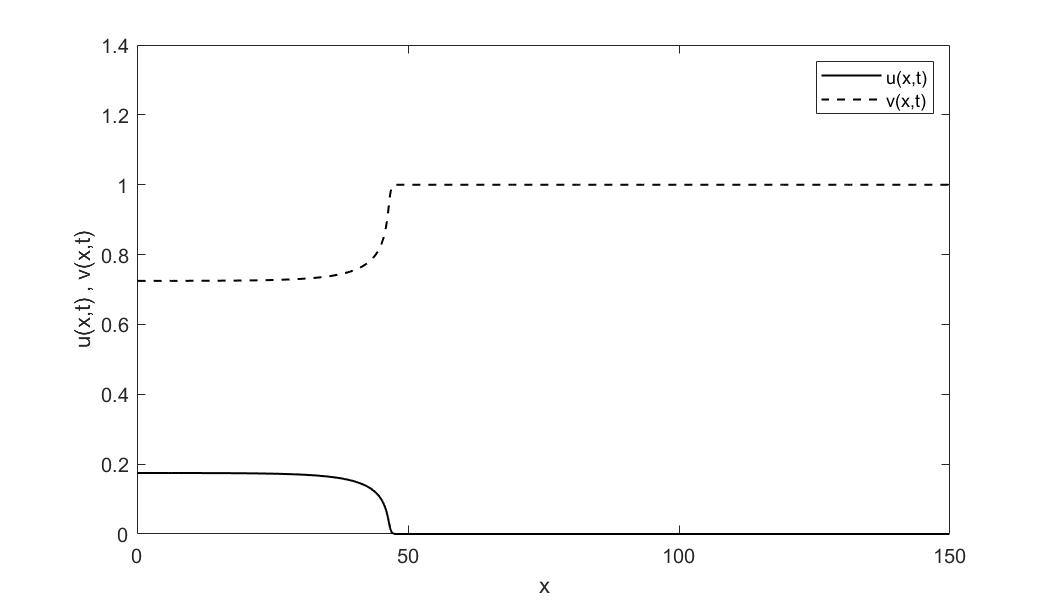}
		\caption{Traveling wave profiles 
			for $\gamma= 300, \beta = 20, p =2, \omega =0.01, c= 25, 
			\alpha = 1.$}
		\label{num_front4}
	\end{center}
\end{figure}

\section{Acknowledgements and other remarks.}
Ghazaryan was supported by  Faculty Research Grants Program at Miami University. The same grant included a Research Graduate Assistantship  to support  Bakhshi  during the completion of  her  Master Program.  Rodr\'iguez was partially funded by the NSF DMS-1516778.

\end{document}